\algnewcommand{\Initialize}[1]{%
  \State \textbf{Initialize:}
  \State \hspace*{\algorithmicindent}\parbox[t]{0.8\linewidth}{\raggedright #1}
}
\newtheorem{thm}{Theorem}[]
\newtheorem*{thm*}{Theorem}
\newtheorem{m-thm}[thm]{Meta-Theorem}
\newtheorem*{m-thm*}{Meta-Theorem}
\newtheorem{lem}{Lemma}[]
\newtheorem{remark}{Remark}[]
\newtheorem{prop}{Proposition}[]
\newtheorem*{prop*}{Proposition}
\newtheorem{Definition}{Definition}
\newtheorem{Corollary}[thm]{Corollary}
\newenvironment{cor}{\begin{Corollary}}{\end{Corollary}}
\newtheorem{Example}[thm]{Example}
\newtheorem{algor}[thm]{Method}
\newtheorem{Condition}[thm]{Condition}
\newtheorem{assp}{Assumption}[]
\renewcommand{\H}{\mathcal{H}}
\newcommand{\R}{\mathbb{R}}
\newcommand{\dd}{\,\mathrm{d}}
\newcommand{\ve}{\varepsilon}
\newcommand{\F}{\mathcal{F}}
\renewcommand{\phi}{\varphi}
\newcommand{\X}{\mathcal{X}}
\newcommand{\bm}[1]{{\mbox{\boldmath $#1$}}}
\DeclareMathOperator{\cv}{conv}
\DeclareMathOperator{\supp}{supp}
\newcommand{\lmid}{\,\middle|\,}
\newcommand{\E}[1]{\mathbb{E}\!\left[#1\right]}
\renewcommand{\P}[1]{\mathbb{P}\!\left(#1\right)}
\newcommand{\ord}[1]{\mathcal{O}\!\left(#1\right)}
\newcommand{\K}{\mathcal{K}}
\newcommand{\ip}[1]{\left\langle #1 \right\rangle}
\newcommand{\vertiii}[1]{{\left\vert\kern-0.25ex\left\vert\kern-0.25ex\left\vert #1 
    \right\vert\kern-0.25ex\right\vert\kern-0.25ex\right\vert}}
\renewcommand{\tilde}{\widetilde}
\DeclareMathOperator{\wce}{wce}
\newcommand{\Hil}{\mathcal{H}}
\title{Positively Weighted Kernel Quadrature via Subsampling}
\author{%
	Satoshi Hayakawa, \ Harald Oberhauser, \ 
	Terry Lyons\\
	Mathematical Institute,
	University of Oxford\\
	\texttt{\{hayakawa,oberhauser,tlyons\}@maths.ox.ac.uk}
}
\begin{document}
	\maketitle%%%%%%%%%%%%%%%%%%% タイトル %%%%
	
	%\frontmatter% ここから前文
	\begin{abstract}%%%%%%%%%%%%% 概要 %%%%%%%%
		% Although the recent development of related theory has revealed
		% the connection between the attainable convergence rate of kernel quadrature
		% and the spectrum of the corresponding integral operators,
		% little has been studied about positively weighted rules.
		We study kernel quadrature rules with convex weights.
		Our approach combines the spectral properties of the kernel with recombination results about point measures.
		This results in effective algorithms that construct convex quadrature rules using only access to i.i.d.~samples from the underlying measure and evaluation of the kernel and that result in a small worst-case error.
		In addition to our theoretical results and the benefits resulting from convex weights, our experiments indicate that this construction can compete with the optimal bounds in well-known examples.
		\footnote{Code: \url{https://github.com/satoshi-hayakawa/kernel-quadrature}}
		%\begin{paragraph}{Keywords}
		%  Kernel quadrature,
		%  Mercer decomposition,
		%  Nystr{\"o}m approximation,
		%  Subsampling
		%\end{paragraph}
	\end{abstract}
	
	%\tableofcontents%%%%%%%%%%%%% 目次 %%%%%%%%
	%\mainmatter% ここから本文 %%% 本文 %%%%%%%%
	\everypar{\looseness=-1}
	\section{Introduction}
	The goal of numerical quadrature is to provide, for a given probability measure $\mu$ on a space $\X$, a set of points $x_1,\ldots,x_n\in \X$ and weights $w_1,\ldots,w_n\in \R$ such that 
	\begin{equation}\label{eq:integral approximation}
		\sum_{i=1}^n w_if(x_i) \approx \int_\X f(x)\dd\mu(x)
	\end{equation}
	holds for a large class of functions $f:\X \to \R$.
	Kernel quadrature focuses on the case when the function class forms a reproducing kernel Hilbert space (RKHS). 
	What makes kernel quadrature attractive, is that the kernel choice provides a simple and flexible way to encode the regularity properties of a function class. 
	Exploiting such regularity properties is essential when the integration domain $\X$ is high-dimensional or the function class is large.
	Additionally, the domain $\X$ does not have to be Euclidean, but can be any topological space that carries a positive semi-definite  kernel.
	
	More formally, given a set $Q  \coloneqq  \{(w_i,x_i):i=1,\ldots,n\}\subset \R \times \X$ denote with $\mu^Q  \coloneqq  \sum_{i=1}^n w_i \delta_{x_i}$ the resulting measure on $\X$.
	We refer to $Q$ [resp.~$\mu^Q$] as a quadrature [resp.~quadrature measure], to the points $x_1,\ldots,x_n$ as the support of $Q$ [resp.~$\mu^Q$].
	The aim of kernel quadrature is to construct quadrature measures $\mu^Q$ that have a small worst-case error
	\begin{align}
		\wce(Q;\Hil_k, \mu) & \coloneqq 
		\sup_{\lVert f\rVert_{\Hil_k}\le 1}\left\lvert
		\int_\X f(x) \dd\mu^Q(x)
		- \int_\X f(x)\dd\mu(x)
		\right\rvert 
		\label{wce},
	\end{align}
	where $\Hil_k$ denotes the RKHS associated with a positive semi-definite kernel $k$.
	If the weights are positive and sum up to one, $w_i >0$, $\sum_{i=1}^n w_i=1$, then we refer to $Q$ as a convex quadrature rule.
	
	\paragraph{Contribution.}
	The primary contribution of this article is to leverage recombination (a consequence of Carath\'eodory's Theorem) with spectral analysis of kernels to construct convex kernel quadrature rules and derive convergence rates.
	We also provide efficient algorithms that compute these quadrature rules; they only need access to i.i.d.~samples from $\mu$ and the evaluation of the kernel $k$.
	See Table \ref{table:comp} for a comparision with other kernel quadrature constructions.
	
	The table is written by using $\sigma_n$ and $r_n$,
	which represent a sort of decay of the kernel with respect to $\mu$.
	Typical regimes are
	$\sigma_n\sim n^{-\beta}$ (e.g. Sobolev) or $\sigma_n \sim \exp(-\gamma n)$
	(e.g. Gaussian) depending on the `smoothness' of the kernel \citep[e.g.][]{fas12,bac17} (see also Section \ref{sec:eig-gauss}),
	and in such regimes (with $\beta\ge2$ or $\gamma>0$),
	$\sigma_n$ or $r_n (\lesssim n\sigma_n)$ provide faster rates than
	$\wce^2 \sim 1/n$
	of the usual Monte Carlo rate.
	For more examples including multivariate Sobolev spaces,
	see \citet[][Section 2.3]{bac17}.
	
	\paragraph{Limitation.}
	Our proposed methods are based on either Mercer or Nystr{\"o}m approximation.
	Though our Mercer-based methods result in strong theoretical bounds, they require the knowledge of Mercer decomposition like \citep{bac17,bel19,bel20}, which is not available for general $(k, \mu)$.
	Our Nystr{\"o}m-based methods apply to much more general situations and outperform existing methods in experiments, but the $n/\sqrt{\ell}$ term makes their theoretical bound far from competitive.
	Further study is needed to bridge the gap between theory and empirical results.
	
	\begin{table}[h]
		\centering
		\renewcommand{\arraystretch}{1.2}
		\begin{tabular}{cccccc}
			Method & 
			\begin{tabular}{c}
				Bound of squared $\wce$
			\end{tabular}
			& Computational complexity &
			C & M & E
			\\\hline
			Herding \citep{che10,bac12} & $1/n$ & 
			$n\cdot(\text{$n$: global
				optimization})$
			& $\checkmark$ & $\checkmark$ & \\
			SBQ \citep{hus12} & Not found & 
			$n \cdot (\text{$n^2$:
				global optimization})$
			& & $\checkmark$ & \\
			Leveraged \citep{bac17} & 
			$\sigma_m$, $m =\ord{n\log n}$
			& Unavailable & & & \\
			DPP \citep{bel19,bel21} & $r_{n+1}$ &
			$n^3\cdot(\text{rejection sampling})$
			& & & \\
			CVS \citep{bel20}  & $\sigma_{n+1}$ &
			Unavailable & & $\checkmark$ & \\
			KT++ \citep{dwi21,dwi22,she22}  & $(1/n^2 + 1/N)\mathop\mathrm{polylog}(N)$ &
			$N\log^3N$ & $\checkmark$ & $\checkmark$ & $\checkmark$
			\\
			\hline
			
			\multicolumn{6}{l}{Ours:}\\\hline
			
			Mercer$^\dagger$ & $r_n$ &
			$nN_\phi + C(n, N_\phi)$
			& $\checkmark$
			&  & \\
			M. + empirical$^\ddagger$ & $r_n + \frac1N$ &
			$nN + n^3\log(N/n)$ & $\checkmark$
			&  & $\checkmark$ \\
			Nystr{\"o}m$^\dagger$ &
			$n\sigma_n + r_{n+1}
			+ \frac{n}{\sqrt{\ell}}$
			&
			$n\ell N_\phi + n\ell^2 + C(n, N_\phi)$
			&
			$\checkmark$ & $\checkmark$ &
			\\
			N. + empirical$^\ddagger$ &
			$n\sigma_n + r_{n+1}
			+ \frac{n}{\sqrt{\ell}} + \frac1N$
			& $n\ell N + n\ell^2 + n^3\log(N/n)$  &
			$\checkmark$ & $\checkmark$ & $\checkmark$
			\\\hline
		\end{tabular}\renewcommand{\arraystretch}{1.0}
		\vskip\baselineskip
		\caption{Comparison on $n$-point kernel quadrature rules.
			We are omitting the $\mathcal{O}$ notation
			throughout the table.
			Note that the assumption under which
			the theoretical guarantee holds varies from
			method to method,
			and this table
			displays just a representative bound
			derived in the cited references.
			Here are remarks on the notation.
			(1) $\sigma_m$ is the $m$-th eigenvalue of
			the integral operator $\K$,
			and $r_m = \sum_{i = m}^\infty\sigma_i$.
			(2) The symbols in the first line respectively mean
			C: convex, M: {\it not} using the knowledge of Mercer decomposition,
			and E: {\it not} using the knowledge of
			expectations such as $\int_\X k(x, y)\dd\mu(y)$.
			(3) The $(\text{$m$: global optimization})$
			is indicating
			the cost of globally optimizing a function
			whose evaluation costs $\Theta(m)$.
			($\dagger$) Mercer/Nystr{\"o}m are the algorithms
			based on random convex hulls, see Section~\ref{sec:main-rchq} and
			Appendix~\ref{app:known expectations}.
			($\ddagger$) M./N. + empirical are the algorithms
			discussed in the main text. 
		}\label{table:comp}
	\end{table}
	
	\paragraph{Why Convex Weights?} There are several reasons why convex weights are preferable:
	\begin{enumerate*}[label=(\roman*)]
		\item \textbf{Positive Integral Operator:} Kernel quadrature provides an approximation of the integration operator $f \mapsto I(f)=\int f(x)\dd\mu(x)$.
		Hence, a natural requirement is to preserve basic properties of this operator and positive weights preserve the positivity of this operator.
		\item \textbf{Uniform estimates and Robustness:}
		In applications, the RKHS $\mathcal{H}_k$ may be mis-specified if a quadrature rule with negative weights is applied to a function $f \notin \mathcal{H}_k$, the approximation error \eqref{eq:integral approximation} can get arbitrary bad; in contrast, a simple estimate shows that convex weights give uniform bounds, see Appendix \ref{app:robustness}. 
		\item \textbf{Iteration:}
		Consider the $m$-fold product of quadrature formulas
		for approximating $\mu^{\otimes m}$ on $\X^m$.
		This is a common construction for a multidimensional quadrature formulas (e.g., for polynomials) from one-dimensional formulas \citep{tch15} or numerics for stochastic differential equations \citep{lyo04}.
		In doing so, working with a probability measure is strongly preferred, since otherwise the total variation of their $m$-fold product gets exponentially large as $m$ increases
		($\lVert \mu^{\otimes m}\rVert_{\mathrm{TV}} = \lVert \mu\rVert_{\mathrm{TV}}^m$).
	\end{enumerate*}

	\paragraph{Related Literature.}
	Roughly speaking, there have been two approaches to kernel-based quadrature formulas: kernel herding and random sampling.
	In kernel herding
	or its variants, the points $(x_i)_{i=1}^n$ are found iteratively, typically based on the Frank--Wolfe gradient descent algorithm \citep{che10,bac12,hus12}.
	
	In the random sampling approach, $(x_i)_{i=1}^n$ are sampled and subsequently the weights are optimized.
	Generically, this results only in a signed measure $\mu^Q$ but not a probability measure.
	\citet{bac17} and \citet{bel19} use the eigenvalues and the eigenfunctions of the integral operator $\K: f\mapsto\int_\X k(\cdot, y)f(y)\dd\mu(y)$ to obtain a Mercer-type decomposition of $k$ \citep{ste12}.
	\citet{bac17} then uses the eigenvalues and eigenfunctions of $\K$ to define an optimized measure from which the points $(x_i)$ are i.i.d.~sampled.
	This achieves a near optimal rate, but the exact sampling from this measure is
	usually unavailable, although for special cases, it can be done efficiently. 
	In contrast, \citet{bel19} proposes non-independent sampling based on the determinantal point process \citep[DPP;][]{hou06}.
	These two papers also treat the more general quadrature problem that includes a weight function $g\in L^2(\mu)$,
	i.e., approximating $\int_\X f(x)g(x)\dd\mu(x)$ for $f\in\Hil_k \subset L^2(\mu)$, which we do not discuss in this paper.
	Another recently introduced method is kernel thinning
	\citep{dwi21,dwi22},
	which aims at efficient compression of empirical measures that can be obtained by sampling like our `+ empirical' methods.
	Its acceleration \citep{she22} makes it a competitive candidate in terms of compressing	$N\sim n^2$ points (`KT++' in Table \ref{table:comp}).
	
	Finally, we emphasize that the kernel quadrature literature is vast, and the distinction between herding and sampling is only a rough dichotomy, see e.g.~\citep{de2005near,liu2017black,bri15,kan16,oat17,kar18,kan20,sou20}.
	% in addition to kernel herding or sample optimization, the literature includes a hybrid approache of herding and weight optimization \citep{bri15}, kernel quadrature combined with control variate and/or Sard's method \citep{oat17,kar18,sou20}, as well as convergence analysis for the case when the RKHS is mis-specified
	% \citep{kan16,kan20}. 
	% Table \ref{table:comp}
	% provides a comparison
	% on existing kernel quadrature methods and
	% our contribution.
	Beyond kernel quadrature, our algorithms can also contribute to the density estimation approach in \citep{tur21} which relies on recombination based on Fourier features although we do not pursue this further in this article.
	
	\paragraph{Outline.}
	{Section \ref{sec:main} contains our main theoretical and methodological contribution.
		Section \ref{sec:num} provides numerical experiments on common benchmarks.
		The Appendix contains several extensions of our main result, proofs, and further experiments and benchmarks.}

	\section{Main Result}\label{sec:main}
	Assume we are given a set\footnote{The number $n-1$ stems from
		Carath{\'e}odory's theorem, Remark \ref{rem:mar}, and leads to an $n$ point quadrature rule.} of $n-1$ functions $\varphi_1,\ldots,\varphi_{n-1} :\X \to \R$ such that their linear combinations well approximate functions in $\Hil_k$.
	Then our kernel quadrature problem reduces to the construction of an $n$-point
	discrete probability measure $\mu^{Q_n}= \sum_{i=1}^n w_i \delta_{x_i}$ such that 
	\begin{align}\label{eq:match moments}
		\int_\X \varphi_i(x) \dd\mu^{Q_n}(x)
		= \int_\X \varphi_i(x) \dd\mu(x)\quad
		\text{for every }  i=1,\ldots,n.
	\end{align}
	A simple way to {\it approximately}
	construct this $\mu^{Q_n}$ is to first, sample $N \gg n$ points, $(y_i)_{i=1}^N$, from $\mu$ such that their empirical measure, $\tilde{\mu}_N=\frac{1}{N} \sum_{i=1}^N \delta_{y_i}$, is a good approximation to $\mu$ in the sense that $\int \varphi_i\dd\tilde{\mu}_N \approx \int \varphi_i \dd\mu$ for $i=1,\ldots,n-1$, and secondly, apply a so-called recombination algorithm
	(Remark \ref{rem:mar})
	that takes as input $(y_i)_{i=1}^N$ and $n$ functions $\varphi_1,\ldots,\varphi_{n-1}$ and outputs a measure $\mu^{Q_n}=\sum w_i \delta_{x_i}$ by selecting a subset $(x_i)_{i=1}^n$ of the points $(y_i)_{i=1}^N$ and giving them weights $(w_i)_{i=1}^n$ such that $\mu^{Q_n}$ is a probability measure that satisfies
	the equation
	\eqref{eq:match moments}
	with $\mu$ replaced by $\tilde{\mu}_N$.
	
	The challenging parts of this approach are 
	\begin{enumerate*}[label=(\roman*)]
		\item\label{itm:find functions} to construct functions $\varphi_1,\ldots,\varphi_{n-1}$ that approximately span the RKHS $\Hil_k$ for a small $n$;
		\item\label{itm:bounds} to arrive at good quantitative bounds despite the (probabilistic) sampling error resulting from the use of the empirical measure $\tilde{\mu}_N$, and the function approximation error via $\varphi_1,\ldots,\varphi_{n-1}$.
	\end{enumerate*}
	To address \ref{itm:find functions} we look for functions such that
	\begin{align}\label{eq:finite approximation}
		k(x,y) \approx k_0(x,y) \coloneqq \sum_{i=1}^{n-1}
		c_i\varphi_i(x)\varphi_i(y)
	\end{align}
	with some $c_i\ge0$.
	Two classic ways to do this are the Mercer and Nystr\"om approximations. 
	The remaining, item \ref{itm:bounds} is our main contribution.
	Theorem \ref{thm:main} shows that the worst-case error, \eqref{eq:wce}, is controlled by the sum of two terms: the first term stems from the kernel approximation  \eqref{eq:finite approximation}, the second term stems from the sample error.

	\begin{thm}\label{thm:main}
		Let $\mu$ be a Borel probability measure on $\X$ and $k$ a positive semi-definite  kernel on $\X$ such that $\int_\X k(x,x) \dd\mu(x) < \infty$. 
		Further, let $n$ be a positive integer and assume
		$k_0$ is a positive semi-definite  kernel on $\X$ such that
		\[
		\text{1. $k-k_0$ is a positive semi-definite  kernel on $\X$},
		\quad \text{and} \quad
		\text{2. $\dim \Hil_{k_0} < n$.}
		\]
		There exists a function $\operatorname{KQuad}$ such that if $D_N$ is a set of $N$ i.i.d.~samples from $\mu$, then $Q_n=\operatorname{KQuad}(D_N)$ is a random $n$-point convex quadrature that satisfies
		\begin{align}\label{eq:wce}
			\mathbb{E}_{D_N}
			\bigl[\wce(Q_n; \Hil_k, \mu)^2\bigr]
			\le 8\int_\X (k(x, x)-k_0(x,x))\dd\mu(x) + \frac{2c_{k,\mu}}N.
		\end{align}
		where $c_{k,\mu}\coloneqq \int_\X k(x, x)\dd\mu(x)- \iint_{\X\times\X} k(x, y) \dd\mu(x)\dd\mu(y)$.
		
		Moreover, the support of $Q_n$ is a subset of $D_N$ and given
		functions $\phi_1,\ldots,\phi_{n-1}\in L^1(\mu)$
		with $\Hil_{k_0}\subset\mathop\mathrm{span}\{
		\phi_1,\ldots,\phi_{n-1}\}$, $Q_n=\operatorname{KQuad}(D_N)$ can be computed with Algorithm
		\ref{algo:main} in $\ord{nN + n^3\log(N/n)}$
		computational steps.
	\end{thm}
	The function $\operatorname{KQuad}$ is deterministic but since $D_N$ is random, the resulting quadrature rule $Q_n$ is random, hence also the resulting worst case error $\wce(Q;\Hil_k, \mu)$ and the expectation in \eqref{eq:wce} denotes the expectation over the $N$ samples in $D_N$.
	The theoretical part of Theorem \ref{thm:main}
	follows from more general results that we present and prove in the Appendix: Theorem \ref{thm:emp-unify} proves the inequality,
	essentially by comparing $\Hil_k$ with $\Hil_{k_0}$; Theorem \ref{thm:uni-exist} proves the existence.
	The algorithmic part of Theorem\ref{thm:main} is discussed in Section \ref{sec:algo} below.
	Theorem \ref{thm:main} covers our two main examples for the construction of $k_0$, resp.~the choice of $\varphi_1,\ldots,\varphi_{n-1}$, and for which the error estimate gets quite explicit: the Mercer approximation, see Section \ref{sec:mer}, and the Nystr\"om approximation, see Section \ref{sec:nys}. 
	The former requires some knowledge about the spectrum of the kernel which is, however, known for many popular kernels; the latter works in full generality but yields worse theoretical guarantees for the convergence rate. 
	Finally, we emphasize that $N$ and $n$ in Theorem~\ref{thm:main} can be chosen independently and we will see that from a computational point the choice $N \sim n^2$ is preferable in which case \eqref{eq:wce}
	is faster rate than Monte Carlo,
	see also Table \ref{table:comp}.
	
	\subsection{Algorithm}\label{sec:algo}
	
	\begin{algorithm}[H]
		\caption{Kernel Quadrature with Convex Weights via Recombination $\operatorname{KQuad}$}\label{algo:main}
		\begin{algorithmic}[1]
			\Require{A positive semi-definite kernel $k$ on $\X$, a probability measure $\mu$ on $\X$, integers $N \ge n \ge1$,
				another kernel $k_0$, functions $\phi_1,\ldots,\phi_{n-1}$
				on $\X$     with $\Hil_{k_0}\subset \mathop\mathrm{span}\{\phi_1,\ldots,\phi_{n-1}\}$} and a set $D_N$ of $N$ i.i.d.\,samples from $\mu$.
			\Ensure{A set $Q_n  \coloneqq  \{(w_i,x_i)\mid i=1,\ldots,n\}\subset \R \times \X$ with $w_i\ge0$, $\sum_{i=1}^n w_i =1$}  
			%\State{Sample $y_1, \ldots, y_N$ independently from $\mu$}\label{state:sample in recombination}
			\State{Apply a Recombination Algorithm (Remark \ref{rem:mar}) with
				$\bm\psi = (\phi_1, \ldots, \phi_{n-1},
				k_{1,\mathrm{diag}})^\top$,
				to the empirical measure
				$\frac1N\sum_{y \in D_N} \delta_{y}$ to
				obtain points
				$\{\tilde{x}_1, \ldots, \tilde{x}_{n+1}\}
				\subset D_N$ and weights
				$\bm{v}=
				(v_1, \ldots, v_{n+1})^\top\ge\bm{0}$
				that satisfy $\bm1^\top\bm{v} = 1$
				and
				$\bm\psi(\tilde{\bm{x}})\bm{v}
				=\frac1N\sum_{i=1}^N\bm\psi(\tilde{x}_i)$,
				where
				$\bm\psi(\tilde{\bm{x}})=[\bm\psi
				(\tilde{x}_1),
				\ldots, \bm\psi(\tilde{x}_{n+1})]\in \R^{n\times(n+1)}$.
			}\label{state:recombination}
			\State{Apply SVD with the matrix 
				$A=[\phi_{i-1}(y_j)]_{ij}\in\R^{n\times(n+1)}$ with $\phi_0=1$ to find a nonzero vector
				$\bm{u}\in\R^{n+1}$ such that
				$A\bm{u} = \bm0$ and
				$k_{1,\mathrm{diag}}
				(\tilde{\bm{x}})^\top\bm{u} \ge0$}
			\State{Compute the smallest $\alpha\ge0$
				such that $\bm{v} - \alpha\bm{u} \ge\bm{0}$
				and $v_j - \alpha u_j =0$ for some $j$}
			\State{Return $(w_i)_{i=1}^n \leftarrow
				(v_k-\alpha u_k)_{k\in I}$
				and $(x_i)_{i=1}^n \leftarrow
				(\tilde{x}_k)_{k\in I}$,
				where $I = \{1, \ldots, n+1\}\setminus\{j\}$}
			%\State{Compute the worst-case error \eqref{wce} for $\mu^{Q_n}=\sum_{i=1}^n w_i \delta_{x_i}$ }
		\end{algorithmic}
	\end{algorithm}
	
	Suppose we are given
	$k_0$ and $\phi_1, \ldots, \phi_{n-1}\in L^1(\mu)$
	with $\Hil_{k_0}\subset\mathop\mathrm{span}\{
	\phi_1,\ldots,\phi_{n-1}\}$,
	and also
	$N$ independent samples from $\mu$ denoted by
	$D_N = (y_1, \ldots, y_N)$.
	Theorem \ref{thm:emp-unify} in the Appendix shows that 
	if we construct a convex quadrature
	$Q_n=(w_i, x_i)_{i=1}^n$ satisfying
	\begin{equation}
		\sum_{i=1}^nw_i\bm\phi(x_i)
		=\frac1N\sum_{i=1}^N\bm\phi(y_i),
		\qquad
		\sum_{i=1}^nw_ik_{1,\mathrm{diag}}(x_i)
		\le \frac1N\sum_{i=1}^Nk_{1,\mathrm{diag}}(y_i),
		\label{eq:nys-constraints}
	\end{equation}
	where $\bm\phi =(\phi_1,\ldots,\phi_{n-1})^\top$
	and $k_{1,\mathrm{diag}}(x) = k(x, x) - k_0(x, x)$,
	it satisfies the bound \eqref{eq:wce}.
	For this problem,
	we can use the so-called {\it recombination}
	algorithms:
	\begin{remark}[Recombination]\label{rem:mar}
		Given $d-1$ functions (called test functions)
		and a probability measure supported on $N>d$ points, there exists a probability measure supported on a subset of $d$ points that gives the same mean to these $d-1$ functions.
		This follows from Carath\'eodory's theorem and is known as recombination.  Efficient deterministic \citep{lit12, maa19, tch15} as well as randomized \citep{cos20} algorithms exist to compute the new probability measure supported on $d$ points; e.g.~deterministic algorithms perform the recombination, step~\ref{state:recombination}, in $\ord{c_\phi N + d^3\log(N/d)}$ time,
		where $c_\phi$ is the cost of computing
		all the test functions at one sample.
		If each function evaluation is in constant time,
		$c_\phi = \ord{d}$.
	\end{remark}
	Let us briefly provide the intuition behind the deterministic recombination algorithms.
	We can solve the problem of ``reducing (weighted) $2d$ points to $d$ points
	in $\R^d$ while keeping the barycenter''
	by using linear programming or a variant of it.
	If we apply this to $2d$ points each given by a barycenter of approximately $\frac{N}{2d}$ points,
	we can reduce the original problem of size $N$
	to a problem of size $d\cdot\frac{N}{2d} = \frac{N}2$.
	By repeating this procedure $\log_2(\frac{N}{d})$ times we obtain the desired measure.
	
	Although the recombination introduced here only treats
	the equality constraints in \eqref{eq:nys-constraints}
	we can satisfy the remaining constraints just with $n$
	points by modifying it.
	This is done in Algorithm \ref{algo:main} which works as follows:
	First, via recombination,
	find an $(n+1)$-point convex quadrature
	$R_{n+1} = (v_i, y_i)_{i=1}^{n+1}$
	that exactly integrates functions
	$\phi_1, \ldots, \phi_{n-1}, k_{1,\mathrm{diag}}$
	with regard to the empirical measure
	$\frac1N\sum_{i=1}^N\delta_{y_i}$.
	Second, to reduce one point,
	find a direction ($-\bm{u}$ in the algorithm)
	in the space of weights on $(\tilde{x}_i)_{i=1}^{n+1}$
	that does not change the integrals of
	$\phi_1, \ldots, \phi_{n-1}$ and
	the constant function $1$,
	and does not increase the integral of
	$k_{1,\mathrm{diag}}$.
	Finally, move the weight from $\bm{v}$
	to the above direction until an entry becomes zero,
	at $\bm{v} - \alpha\bm{u}$.
	Such an $\alpha\ge0$ exists,
	as $\bm{u}$ must have a positive entry since
	it is a nonzero vector whose entries sum up to one.
	Now we have a convex weight vector with
	at most $n$ nonzero entries, so
	it outputs the desired quadrature
	satisfying \eqref{eq:nys-constraints}.
	
	\subsection{Mercer Approximation}\label{sec:mer}
	In this section and Section \ref{sec:nys},
	we assume that
	$k$ has a pointwise convergent Mercer decomposition
	$k(x, y)
	=\sum_{m=1}^\infty
	\sigma_me_m(x)e_m(y)$
	with
	$\sigma_1\ge\sigma_2\ge\cdots\ge0$
	and $(e_m)_{m=1}^\infty\subset L^2(\mu)$
	being orthonormal \citep{ste12}.
	If we let $\K$
	be the integral operator $L^2(\mu)\to L^2(\mu)$
	given by $f\mapsto \int_\X k(\cdot, y)f(y)\dd\mu(y)$,
	then $(\sigma_m, e_m)_{m=1}^\infty$ are the eigenpairs of this operator.

	The first choice
	of the approximate kernel $k_0$
	is just the trucation of
	Mercer decomposition.
	\begin{cor}\label{main-bdd}
		Theorem \ref{thm:main} applied with $k_0(x,y) =
		\sum_{m=1}^{n-1}\sigma_m e_m(x)e_m(y)$ yields a random convex quadrature rule $Q_n$ such that
		\begin{equation}
			\mathbb{E}_{D_N}\bigl[\wce(Q_n; \Hil_k, \mu)^2\bigr]
			\le 8
			\sum_{m=n}^\infty \sigma_m
			+\frac{2c_{k,\mu}}N.
			\label{eq:main-bdd}
		\end{equation}
		%where $c_{k,\mu}\coloneqq \int_\X k(x, x)\dd\mu(x)- \iint_{\X\times\X} k(x, y) \dd\mu(x)\dd\mu(y)$.
	\end{cor}
	\begin{proof}
		It suffices to prove the result under the assumption
		$\int_\X k(x, x)\dd\mu(x)=
		\sum_{m=1}^\infty\sigma_m <\infty$,
		as otherwise the right-hand side of \eqref{eq:main-bdd}
		is infinity.
		
		For $k_1\coloneqq k - k_0$,
		we have that
		$k_1(x, y) = \sum_{m=n}^\infty\sigma_me_m(x)e_m(y)$
		and it is the inner product of
		$\Phi(x) \coloneqq (\sqrt{\sigma_m}e_m(x))_{m=n}^\infty$
		and $\Phi(y)$ in $\ell^2(\{n, n+1,\ldots\})$
		and so positive semi-definite.
		Thus $k$ and $k_0$ satisfies the assumption of
		Theorem \ref{thm:main},
		and $\int_\X k_1(x, x)\dd\mu(x) = \sum_{m=n}^\infty\sigma_m$
		applied to \eqref{eq:wce} yields the desired inequality. 
	\end{proof}
	
	\subsection{Nystr{\"o}m Approximation}\label{sec:nys}
	Although the Nystr{\"o}m method \citep{wil00,dri05,kum12} is primarily used for approximating a large Gram matrix by a low rank matrix, it can also be used for directly approximating the kernel function itself and this is how we use it.
	Given a set of $\ell$ points $Z = (z_i)_{i=1}^\ell\subset\X$, the vanilla Nystr{\"o}m approximation of $k(x, y)$ is given by
	\begin{equation}
		k(x, y) = \ip{k(\cdot, x), k(\cdot, y)}_{\Hil_k}
		\approx \ip{P_Z k(\cdot, x), P_Z k(\cdot, y)}_{\Hil_k} =: k^Z(x, y),
		\label{nys-v-int}
	\end{equation}
	where $P_Z :\Hil_k \to \Hil_k$ is a projection operator onto
	$\mathop\mathrm{span}\{k(\cdot, z_i)\}_{i=1}^\ell$.
	In matrix notation, we have
	\begin{equation}
		k^Z(x, y) = k(x, Z)W^+ k(Z, y)  \coloneqq  [k(x, z_1), \ldots, k(x, z_\ell)]
		W^+
		\left[\begin{array}{c}k(z_1, y)\\ \vdots \\ k(z_\ell, y)\end{array}\right],
		\label{nys-v-mat}
	\end{equation}
	where $W = (k(z_i, z_j))_{i, j = 1}^\ell$ is the Gram matrix for $Z$
	and $W^+$ denotes its Moore--Penrose inverse.
	We discuss the equivalence between \eqref{nys-v-int} 
	and \eqref{nys-v-mat} in Appendix \ref{sec:equiv-nys}.
	As $k^Z$ is an $\ell$-dimensional kernel, there exists an $(\ell + 1)$-point
	quadrature formula that exactly integrates functions in $\Hil_{k^Z}$.
	For a quadrature formula, exactly integrating all the functions in $\Hil_{k^Z}$
	is indeed equivalent to exactly integrating $k(z_i, \cdot)$ for all $1\le i\le \ell$,
	as long as the Gram matrix $k(Z, Z)$ is nonsingular.
	Proposition \ref{prop4} in the Appendix provides bound for the associated worst case error. 
	From this viewpoint, the Nystr{\"o}m approximation offers a natural set of test functions.
	
	The Nystr{\"o}m method has a further generalization with a low-rank approximation
	of $k(Z, Z)$.
	Concretely, by letting $W_s$ be the best rank-$s$ approximation of $W = k(Z, Z)$
	(given by eigendecomposition),
	we define the following $s$-dimensional kernel:
	\begin{equation}
		k^Z_s(x, y) \coloneqq k(x, Z)W_s^+ k(Z, y).
		\label{nys-s}
	\end{equation}
	Let $W = U\Lambda U^\top$ be the eigendecomposition
	of $W$, where
	$U = [u_1, \ldots, u_\ell] \in \R^{\ell\times\ell}$
	is a real orthogonal matrix and
	$\Lambda = \mathop\mathrm{diag}(\lambda_1, \ldots, \lambda_\ell)$
	with $\lambda_1 \ge \cdots \ge \lambda_\ell \ge 0$.
	Then, if $\lambda_s > 0$ we have
	\begin{equation}
		k^Z_s(x, y) = \sum_{i = 1}^s \frac1{\lambda_i}
		(u_i^\top k(Z, x))(u_i^\top k(Z, y)).
		\label{eq:nys-sum}
	\end{equation}
	So we can use functions $u_i^\top k(Z, \cdot)$ ($i = 1, \ldots, s$)
	as test functions,
	which is chosen from a larger dimensional space
	$\mathop\mathrm{span}\{k(z_i, \cdot)\}_{i=1}^\ell$.
	Although
	closer to the original usage of the Nyst{\"o}m method is
	to obtain $u_i^\top k(Z, \cdot)$ as an approximation
	of $i$-th eigenfunction of the integral operator $\K$
	with $Z$ appropriately chosen with respect to $\mu$,
	we have adopted an explanation suitable for
	the machine learning literature \citep{dri05,kum12}.
	
	The following is a continuous analogue of
	\citet[Theorem 2]{kum12}
	showing the effectiveness of the Nystr{\"o}m method.
	See also \citet{jin13} for an analysis specific to the case $s = \ell$.
	\begin{thm}\label{thm:nys-op-norm}
		Let $s\le \ell$ be positive integers and $\delta>0$.
		Let $Z$ be an $\ell$-point independent sample from $\mu$.
		If we define the integral operator
		$\mathcal{K}^{Z}_s : L^2(\mu) \to L^2(\mu)$
		by $f\mapsto \int_\X k^Z_s(\cdot, y)f(y)\dd\mu(y)$,
		then we have, with probability at least $1-\delta$,
		in terms of the operator norm,
		\begin{equation}
			\lVert\mathcal{K}^{Z}_s - \mathcal{K}\rVert
			\le \sigma_{s+1} +
			\frac{2\sup_{x\in\X} k(x, x)}{\sqrt{\ell}}
			\left(1 + \sqrt{2\log \frac1\delta}\right).
			\label{wtp1}
		\end{equation}
	\end{thm}
	The proof is given in Appendix \ref{sec:proof-nys}.
	By using this estimate,
	we obtain the following guarantee
	for the random convex quadrature given
	by Algorithm \ref{algo:main}
	and the Nystr{\"o}m approximation.
	\begin{cor}\label{cor:nys}
		Let $D_N$ be $N$-point independent sample from $\mu$ and let $Z$ be an $\ell$-point independent sample from $\mu$.
		Theorem \ref{thm:main} applied with the Nystr\"om approximation
		$k_0 = k_{n-1}^Z$
		yields an random $n$-point convex quadrature rule
		$Q_n$ such that, with probability at least $1-\delta$
		and 
		$k_{\max}:=\sup_{x\in\X}k(x, x)$,
		\begin{align*}
			\mathbb{E}_{D_N}\bigl[\wce
			(Q_n; \Hil_k, \mu)^2
			\,\big\vert\, Z \bigr] &\le
			8\Biggl(n\sigma_n + \sum_{m>n}\sigma_m\Biggr)
			+
			\frac{16(n-1)k_{\max}}{\sqrt{\ell}}
			\Biggl(1 + \sqrt{2\log \frac1\delta}\Biggr)
			+\frac{2c_{k, \mu}}{N}.
		\end{align*}
	\end{cor}
	\begin{proof}
		From \eqref{eq:nys-sum},
		$k^Z(x, y) - k_{n-1}^Z(x, y)
		= \sum_{i=n}^\ell \lambda_i^{-1}(u_i^\top k(Z, x))
		(u_i^\top k(Z, y))$ (ignore the terms with $\lambda_i=0$
		if necessary),
		and it is thus positive semi-definite.
		If we define $P_Z^\perp:\Hil_k\to\Hil_k$ as
		the projection operator onto the orthogonal complement of
		$\mathop\mathrm{span}\{k(\cdot, z_i)\}_{i=1}^\ell$,
		then, from \eqref{nys-v-int}, we also have
		$k(x, y) - k^Z(x, y) = \ip{P_Z^\perp k(\cdot, x),
			P_Z^\perp k(\cdot, y)}_{\Hil_k}$,
		so $k-k^Z$ is also positive semi-definite.
		In particular, $k - k_{n-1}^Z = (k- k^Z) + (k^Z-k_{n-1}^Z)$
		is positive semi-definite.
		Also, it suffices to prove the result
		when $\sum_{m=1}^\infty\sigma_m<\infty$,
		so we can now apply Theorem \ref{thm:main}.
		
		For $k_1\coloneqq k-k_{n-1}^Z$,
		we prove the inequality
		$
		\int_\X k_1(x, x)\dd\mu(x)
		= \sum_{m=1}^\infty
		\ip{e_m, (\K-\K_s^Z)e_m}_{L^2}
		\le (n-1)\lVert\K - \K_s^Z\rVert
		+ \sum_{m\ge n}\sigma_m$ 
		(see \eqref{eq:nys-estimate} in Appendix \ref{sec:app-nys}
		for details),
		and the desired inequality
		follows by combining Theorem \ref{thm:main} and Theorem
		\ref{thm:nys-op-norm} (i.e., \eqref{eq:wce} and \eqref{wtp1}).
	\end{proof}
	
	\begin{remark}\label{rem:comp-nys}
		Algorithm~\ref{algo:main} with the Nyst{\"o}m approximation
		can be decomposed into two parts: (a) Nyst{\"o}m approximation
		by truncated singular value decomposition (SVD) (the first $n-1$
		eigenvectors from an $\ell$-point sample),
		(b) Recombination from an $N$-point empirical measure.
		The complexity of (a) is $\ord{n\ell^2}$,
		and it can also be approximated by randomized SVD in $\ord{n^2\ell + \ell^2\log n}$
		\citep{hal11}.
		The cost of part (b) is
		$\ord{n\ell N + n^3\log(N/n)}$,
		where $n\ell N$ stems from the evaluation of $k_{1, \mathrm{diag}}$
		for all $N$ sampling points.
		If we do not impose the inequality constraint regarding $k_{1, \mathrm{diag}}$,
		which still works well in practice,
		the cost of part (b)
		becomes $\ord{\ell N + n^2\ell\log(N/n)}$, by using the trick
		$\frac1N\sum_{i=1}^N U_{n-1}^\top k(Z, y_i) =U_{n-1}^\top \frac1N\sum_{i=1}^N k(Z, y_i)$,
		where $U_{n-1} = [u_1, \ldots, u_{n-1}]\in\R^{{\ell\times(n-1)}}$
		is a truncation of the matrix that appears in the Nystr{\"o}m approxiamtion (\ref{nys-s},\ref{eq:nys-sum}).
		So the overall complexity is $\ord{n\ell N +n\ell^2 + n^3\log(N/n)}$
		while an approximate algorithm (randomized SVD, without the inequality constraint)
		runs in
		$\ord{\ell N + \ell^2\log n + n^2\ell\log(N/n)}$.
	\end{remark}
	
	\subsection{Kernel Quadrature Using Expectations of Test Functions}\label{sec:main-rchq}
	Algorithm \ref{algo:main}
	and the bound \eqref{eq:wce} can be generally applicable once we obtain
	a low-rank approximation $k_0$
	as we have seen in Section \ref{sec:mer} and \ref{sec:nys}.
	However, since by construction we start by reducing the empirical measure given
	by $D_N$, it is inevitable to have the $\Omega(1/N)$ term in the error estimate
	and performance.
	We can avoid this limitation by exploiting additional knowledge of expectations.

	Let $k_0$ and $k_1$ be positive definite kernels with $k = k_0 + k_1$.
	Let $\bm\phi = (\phi_1, \ldots, \phi_{n-1})^\top$ be the vector
	of test functions that spans $\H_{k_0}$.
	When we know the expectations of them, i.e., $\int_\X\bm\phi(x)\dd\mu(x)$,
	we can actually construct a convex quadrature $Q_n=(w_i, x_i)_{i=1}^n$
	satisfying
	\begin{equation}
	    \sum_{i=1}^n w_i\bm\phi(x_i) = \int_\X \bm\phi(x)\dd\mu(x),
	    \qquad
	    \sum_{i=1}^nw_ik_1(x_i, x_i)
		\le \int_\X k_1(x, x)\dd\mu(x)
		\label{cond:rchq}
	\end{equation}
	with a positive probability
	by an algorithm based on random convex hulls (Appendix~\ref{app:known expectations}, Algorithm~\ref{algo:rchq}).
	
	For this $Q_n$, we have the following theoretical guarantee (see Theorem \ref{thm:unify} in Appendix~\ref{sec:app-theory}):
	\begin{thm}
	    If a convex quadrature $Q_n$ satisfies the condition \eqref{cond:rchq}, then we have
	    \[
	        \wce(Q_n; \Hil_k, \mu)^2 \le 4\int_\X k_1(x, x)\dd\mu(x).
	    \]
	\end{thm}
	If $k_0$ is given the Mercer/Nystr{\"o}m approximations,
	we immediately have the following guarantees;
	they correspond to {\bf Mercer} and {\bf Nystr{\"o}m} in Table~\ref{table:comp}.
	See also Theorem \ref{thm:app-mer} and \ref{thm:nys-exact} for details.
    \begin{itemize}
        \item 
            If $k_0(x, y) = \sum_{m=1}^{n-1}\sigma_me_m(x)e_m(y)$ is given by the Mercer approximation,
    	    we have
    	    \[
    	        \wce(Q_n; \Hil_k, \mu)^2 \le 4\sum_{m=n}^\infty \sigma_m
    	    \]
    	    for a convex quadrature $Q_n$ satisfying \eqref{cond:rchq}.
    	\item
    	    Let $k_0 = k_{n-1}^Z$ be given by the Nystr{\"o}m approximation \eqref{nys-s}
    	    with $Z$ being an $\ell$-point independent sample from $\mu$ (with $\ell > n$).
    	    Then, for a convex quadrature $Q_n$ satisfying \eqref{cond:rchq},
    	    with probability at least $1-\delta$ (with respect to $Z$)
    		and 
    		$k_{\max}:=\sup_{x\in\X}k(x, x)$,
    		we have
    		\begin{align*}
    			\wce
    			(Q_n; \Hil_k, \mu)^2
    			\le
    			4\Biggl(n\sigma_n + \sum_{m>n}\sigma_m\Biggr)
    			+
    			\frac{8(n-1)k_{\max}}{\sqrt{\ell}}
    			\Biggl(1 + \sqrt{2\log \frac1\delta}\Biggr).
    		\end{align*}
    % 		where the probability is with regard to $Z$.
    % 		See also Theorem~\ref{thm:nys-exact}.
    \end{itemize}
	
	\section{Numerical Experiments}\label{sec:num}
	In this section, we compare our methods with several existing methods.
	In all the experiments, we used the setting where we can compute
	$\int_\X k(x, y)\dd\mu(y)$
	for $x\in \X$ and $\iint_{\X\times\X} k(x, y)\dd\mu(x)\dd\mu(y)$
	since then we can evaluate the worst-case error
	of quadrature formulas explicitly.
	Indeed, if a quadrature formula $Q_n$ is
	given by points $X = (x_i)_{i=1}^n$
	and weights $\bm{w} = (w_i)_{i=1}^n$, then we have
	\begin{align}
		\wce(Q_n; \Hil_k, \mu)^2
		= 
		\bm{w}^\top k(X, X) \bm{w}
		-2 \mathbb{E}_{y}[\bm{w}^\top k(X, y)]
		+ \mathbb{E}_{y,y^\prime}[k(y, y^\prime)]
		\label{eq:wce-general}
	\end{align}
	for independent $y,y^\prime\sim\mu$
	under $\int_\X \sqrt{k(x, x)}\dd\mu(x) < \infty$,
	which is a well-known formula for the worst-case error
	\citep{gre06,sri10}.
	An essential remark shown in \citet{hus12}
	is that the Bayesian quadrature \citep{oha91}
	with covariance kernel $k$ given observation at points $(x_i)_{i=1}^n$
	(automatically) estimates the integral as
	$\sum_{i=1}^nw_if(x_i)$ with $(w_i)_{i=1}^n$ minimizing the above expression.
	Once given points $(x_i)_{i=1}^n$ and additional knowledge of expectations,
	we can compute the optimal weights $(w_i)_{i=1}^n$
	by solving a convex quadratic programming (CQP),
	either without any restrictions or with the condition that
	$(w_i)_{i=1}^n$ is convex.
	Although the former can be solved by matrix inversion,
	we have used the optimizer Gurobi\footnote{Version 9.1.2, \url{https://www.gurobi.com/}} for both CQPs to avoid numerical instability.
	For the recombination part,
	we have modified the Python library by \citet{cos20} implementing the algorithm
	of \citep{tch15}.
	
	Our theoretical bounds are close to optimal in classic examples and we see that the algorithm even outperforms the theory in practice especially in Section \ref{sec:sob}.
	We also execute a measure reduction of a large discrete measure
	in terms of Gaussian RKHS and our methods shows a fast convergence rate
	in two ML datasets in Section \ref{sec:ml}.
	\footnote{All done on a MacBook Pro, CPU: 2.4~GHz Quad-Core Intel Core i5, RAM: 8~GB 2133~MHz LPDDR3.}
	
	\begin{figure}
		\centering
		\begin{subfigure}[h]{0.49\hsize}
			\centering
			\includegraphics[width=\hsize]{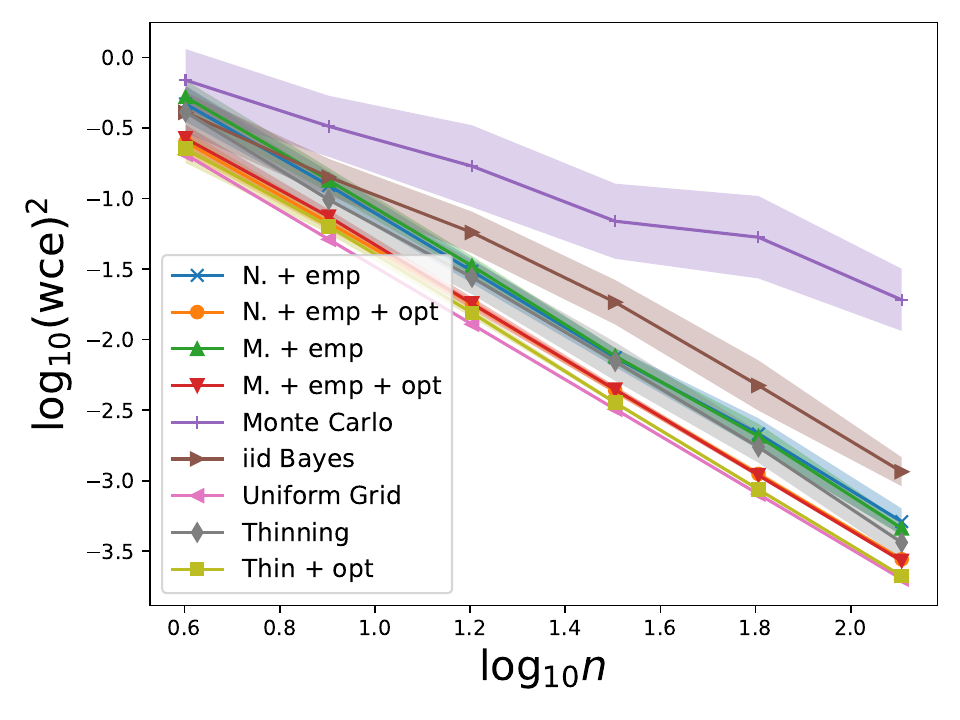}
			\caption{$d=1$, $r = 1$}
		\end{subfigure}
		\begin{subfigure}[h]{0.49\hsize}
			\centering
			\includegraphics[width=\hsize]{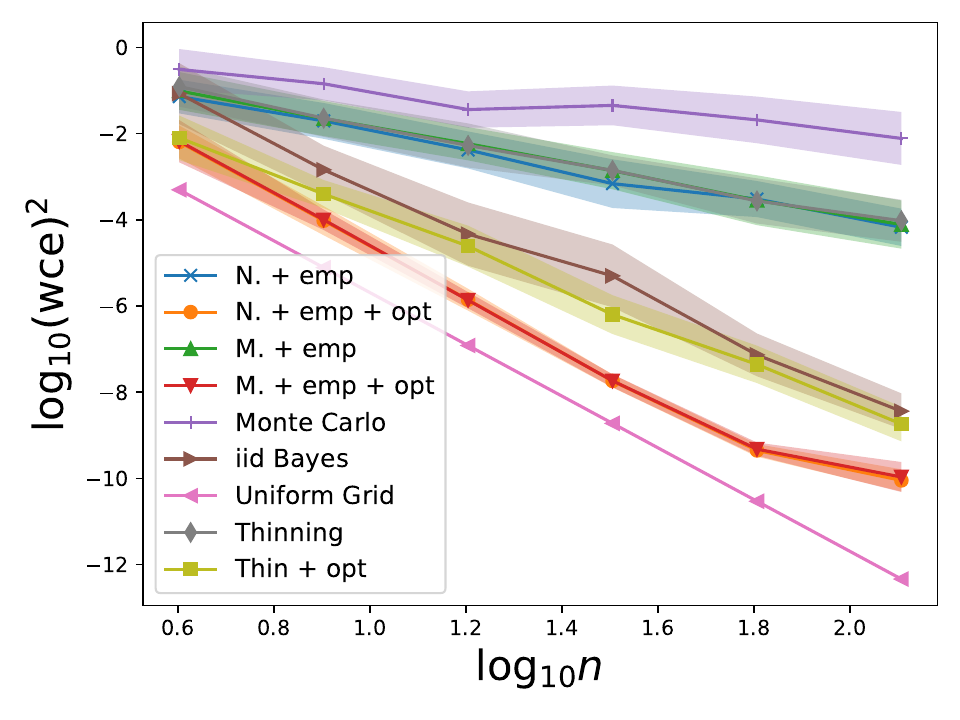}
			\caption{$d=1$, $r = 3$}
		\end{subfigure}
		\begin{subfigure}[h]{0.49\hsize}
			\centering
			\includegraphics[width=\hsize]{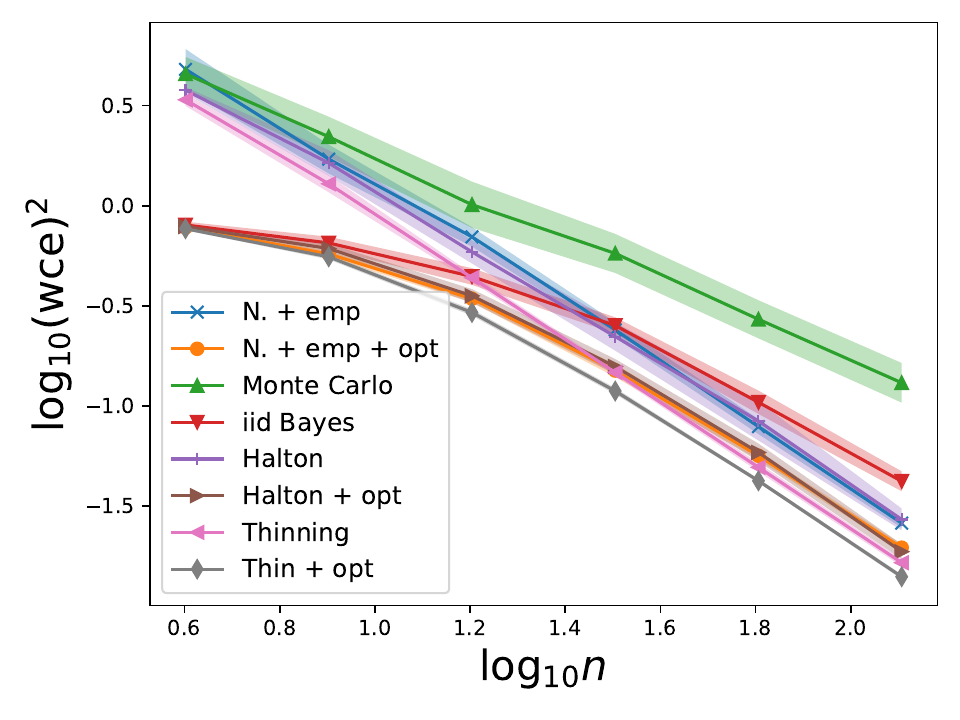}
			\caption{$d=2$, $r = 1$}
		\end{subfigure}
		\begin{subfigure}[h]{0.49\hsize}
			\centering
			\includegraphics[width=\hsize]{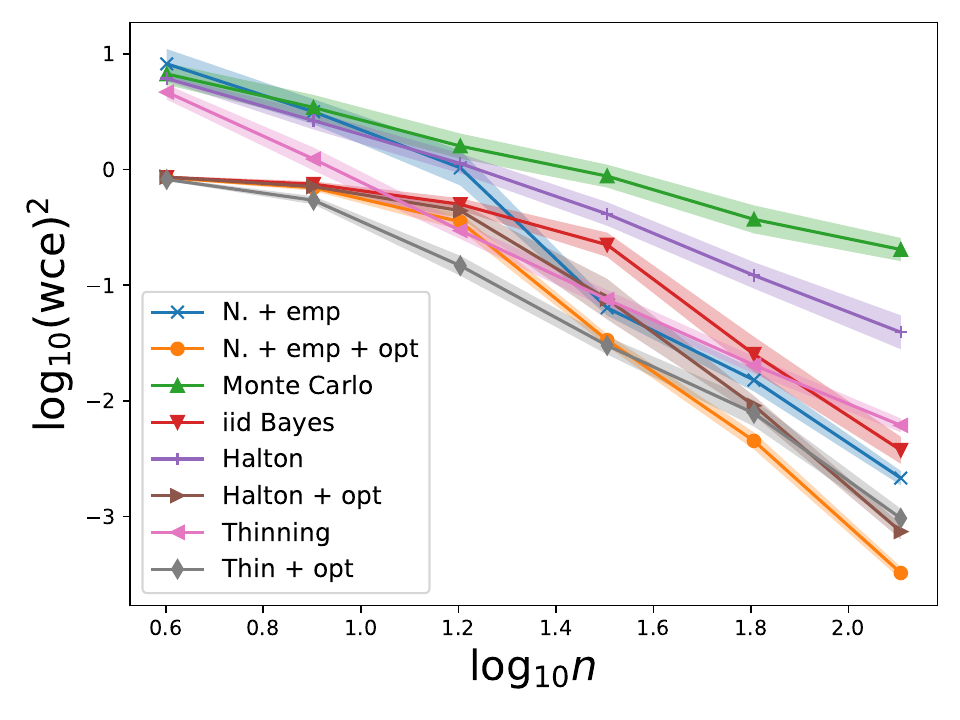}
			\caption{$d=3$, $r = 3$}
		\end{subfigure}
		\caption{Periodic Sobolev spaces with kernel $k_r^{\otimes d}$:
			The average of $\log_{10}(\wce(Q_n; \Hil_k, \mu)^2)$ over $20$ trials
			is plotted for each method of obtaining $Q_n$.
			The shaded regions show their standard deviation.
			The worst computational time per one trial
			was 57 seconds of {\bf Thin + opt}
			in $(d, r, n) = (3, 3, 128)$,
			where
			{\bf Thinning} was 56 seconds and
			{\bf N. + emp [+ opt]} was 22 seconds.}
		\label{fig-sob}
    \end{figure}
    \begin{figure}
		\centering
		\begin{subfigure}[h]{0.49\hsize}
			\centering
			\includegraphics[width=\hsize]{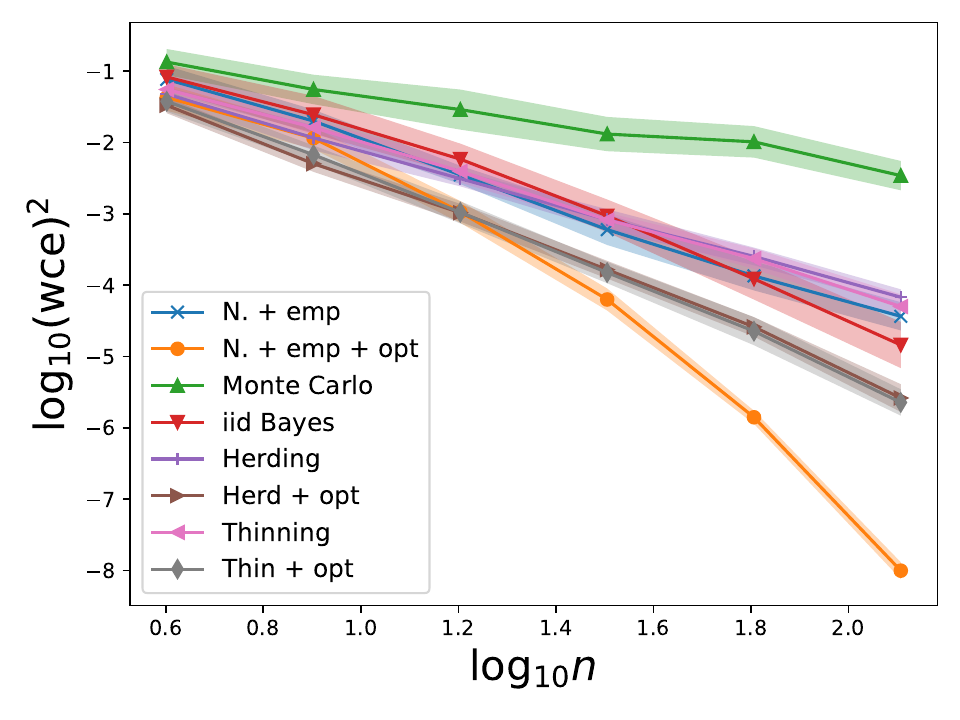}
			\caption{3D Road Network data}
		\end{subfigure}
		\begin{subfigure}[h]{0.49\hsize}
			\centering
			\includegraphics[width=\hsize]{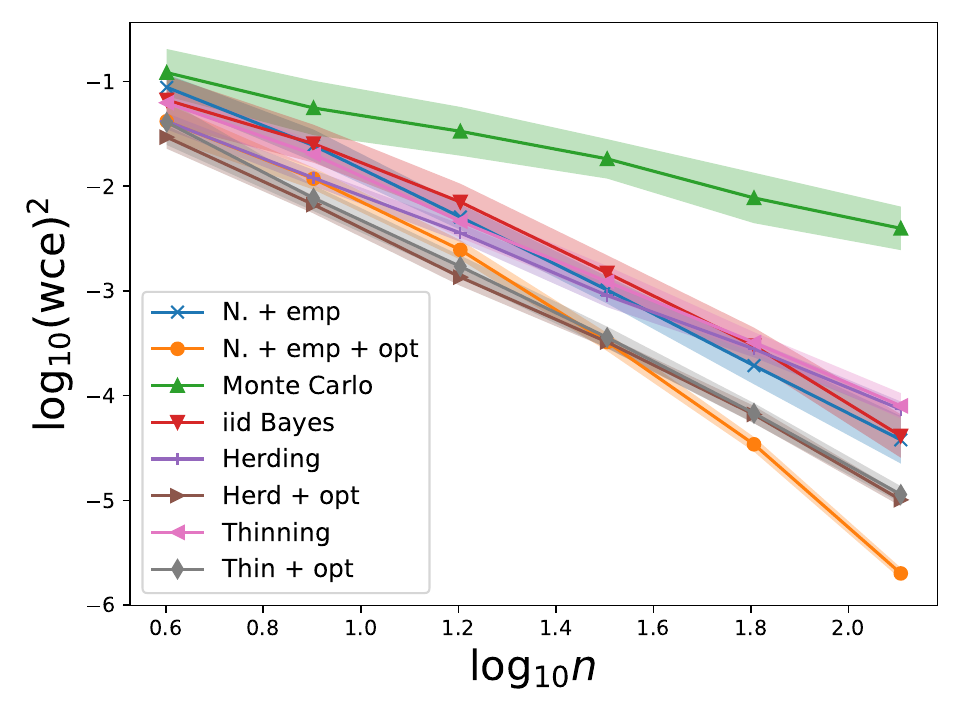}
			\caption{Power Plant data}
		\end{subfigure}
		\caption{Measure reduction in Gaussian RKHS with two ML datasets:
			The average of
			$\log_{10}(\wce(Q_n; \Hil_k, \mu)^2)$ over $20$ trials
			is plotted for each method of obtaining $Q_n$.
			The shaded regions show their standard deviation.
			The worst computational time per one trial
			was 14 seconds of {\bf Thinning [+ opt]}
			in Power Plant data with $n = 128$,
			where {\bf N. + emp [+ opt]} was 6.3 seconds.}
		\label{fig-ml}
	\end{figure}
	
	\subsection{Periodic Sobolev Spaces with Uniform Measure}\label{sec:sob}
	For a positive integer $r$,
	consider the Sobolev space of functions on $[0, 1]$
	endowed with the norm
	$\lVert f \rVert^2 = (\int_0^1 f(x)\dd x)^2
	+ (2\pi)^{2r}\int_0^1 f^{(r)}(x)^2\dd x$,
	where $f$ and its derivatives $f^{(1)}, \ldots f^{(r)}$ are periodic
	(i.e., $f(0) = f(1)$ and so forth).
	This function space can be identifies as the RKHS of the kernel
	\[
	k_r(x, y) = 1 + \frac{(-1)^{r-1}(2\pi)^{2r}}{(2r)!} B_{2r}(|x - y|)
	\]
	for $x, y\in [0, 1]$,
	where $B_{2r}$ is the $2r$-th Bernoulli polynomial \citep{wah90,bac17}.
	If we let $\mu$ be the uniform measure on $[0, 1]$,
	the normalized eigenfunctions (of the integral operator) are
	$1$, $c_m(\cdot) = \sqrt{2}\cos(2\pi m\,\cdot)$ and $s_m (\cdot) = \sqrt{2}\sin(2\pi m\,\cdot)$
	for $m = 1, 2,\ldots$,
	and the corresponding eigenvalues are $1$ and $m^{-2r}$ (both for $c_m$ and $s_m$).
	Although the rectangle formula $f\mapsto n^{-1}\sum_{i=1}^{n}f(i/n)$ (a.k.a. {\tt Uniform Grid} below)
	is known to be optimal for this kernel \citep{zhe81,nov88} in the sense of worst-case error,
	this RKHS is commonly used for testing the efficiency of general kernel quadrature methods
	\citep{bac17,bel19,kan20}.
	We also consider its multivariate extension on $[0, 1]^d$,
	i.e., the RKHS given by the product kernel $k_r^{\otimes d}(\bm{x}, \bm{y}) \coloneqq
	\prod_{i=1}^dk_r(x_i, y_i)$ for $\bm{x}=(x_1,\ldots,x_d), \bm{y} = (y_1,\ldots,y_d)
	\in [0, 1]^d$.
	
	We carried out the experiment for $(d, r)=(1, 1), (1, 3), (2, 1), (3, 3)$.
	For each $(d, r)$,
	we compared the following algorithms for
	$n$-point quadrature rules with
	$n\in \{4, 8, 16, 32, 64, 128\}$.
	\begin{description}
		\item[N. + emp, N. + emp + opt:]
		We used the functions $u_i^\top k(Z, \cdot)$ ($i = 1, \ldots, n - 1$)
		given by the Nystr{\"o}m approximation \eqref{eq:nys-sum} with $s = n-1$
		as test functions $\phi_1, \ldots, \phi_{n-1}$
		in Algorithm \ref{algo:main}.
		The set $Z$ was given as an $(\ell = )10n$-point independent sample from $\mu$.
		We used $N = n^2$ samples from $\mu$.
		In `{\bf + opt}' we additionally optimized the {\it convex}
		weights using \eqref{eq:wce-general}
		\item[M. + emp, M. + emp + opt ($d=1$):]
		We used the first $n - 1$ functions of the sequence of eigenfunctions
		$1, c_1, s_1, c_2, s_2, \ldots$
		as test functions $\phi_1, \ldots, \phi_{n-1}$ in Algorithm \ref{algo:main}.
		We used $N = n^2$ samples from $\mu$.
		In `{\bf + opt}' we additionally optimized the {\it convex}
		weights using \eqref{eq:wce-general}.
		\item[Monte Carlo, iid Bayes:]
		With an $n$-point independent sample $(x_i)_{i=1}^n$
		from $\mu$,
		we used uniform weights $1/n$ in {\bf Monte Carlo}
		and the weights optimized using \eqref{eq:wce-general} in {\bf iid Bayes}.
		\item[Uniform Grid ($d=1$):]
		We used the rectangle formula $f\mapsto n^{-1}\sum_{i=1}^nf(i/n)$.
		This is known to be optimal
		\citep[not just up to constant,
		but exactly;][]{zhe81,nov88},
		and thus equivalent to the Bayesian quadrature
		on the uniform grid,
		i.e., the weights are already optimized.
		\item[Halton, Halton + opt ($d\ge2$):]
		For an $n$-point sequence given by the Halton sequence
		with Owen scrambling \citep{hal60,owe17},
		the uniform weights $w_i=1/n$ is adopted in {\bf Halton}
		and the weights are additionally optimized using \eqref{eq:wce-general}
		in {\bf Halton + opt}.
		\item[Thinning, Thin + opt:]
		Given an $N$-point independent sample $(y_i)_{i=1}^N$ with $N=n^2$
		from $\mu$,
		an $n$-point subset $(x_i)_{i=1}^n$
		taken from a KT++ algorithm (kernel thinning \citep{dwi21,dwi22}
		combined with Compress++ algorithm \citep{she22} with the oversampling parameter
		$\mathfrak{g} = \min\{4, \log_2n\}$,
		implemented with GoodPoints package: \url{https://github.com/microsoft/goodpoints})
		is adopted in {\bf Thinning}.
		In {\bf `+ opt'} we additionally optimized the {\it convex} weights using \eqref{eq:wce-general}.
	\end{description}
	
	The results are given in Figure \ref{fig-sob}.
	In $d=1$, the optimal rate given by {\bf Uniform Grid}
	is known to be $\ord{n^{-2r}}$.
	As the uniform sampling is equal to the {\it optimized distribution}
	of \citet{bac17} in this case,
	{\bf iid Bayes} also achieves this rate up to log factors.
	Although our theoretical guarantee for {\bf M. + emp}
	is $\ord{n^{1-2r} + N^{-1}}$ with $N = n^2$ (Corollary \ref{main-bdd}),
	in the case $(d, r) = (1, 1)$,
	we can observe that in the experiment
	it is better than {\bf iid Bayes} and close to the optimal error of {\bf Uniform Grid},
	but slightly worse than {\bf Thinning}.
	Moreover, {\bf N. + emp}, which does not use
	the information of spectral decomposition,
	is remarkably almost as accurate as {\bf M. + emp} in $d=1$.
	Furthermore, if we additionally use the knowledge of expectations,
	which {\bf iid Bayes} is already doing,
	{\bf M./N. + emp + opt}
	become surprisingly accurate even with $N=n^2$.
	They are worse than {\bf Thinn + opt} when $r=1$,
	but well outperform it when $r=3$.
	Nonlineality in the graph of these methods when $(d, r, n)=(1, 3, 128)$
	should be from numerical accuracy of the CQP solver
	(see also Section \ref{sec:app-sob}).
	
	The accuracy of {\bf N. + emp + opt} becomes more remarkable
	in multivariate cases.
	It behaves almost the same as {\bf Halton + opt} in $d = 2$
	and clearly beats it in $d=3$.
	Also, the sudden jump of our methods around $n = 30$ in $(d, r) = (3, 3)$
	seems to be caused by the jump of eigenvalues.
	Indeed, for the integral operator given by $k_3^{\otimes 3}$ with uniform measure,
	the eigenspace of the largest eigenvalue $1$
	is of dimension $27$,
	and the next largest eigenvalue is $1/64$.
	Again in the latter case,
	{\bf N. + emp + opt} outperforms {\bf Thin + opt},
	and these results suggest that our method works better when there is a strong spectral decay,
	as is explicitly incorporated in our algorithm.

	Note also that we can compare Figure \ref{fig-sob}
	with \citet[][Figure 1]{bel19}
	which includes some other methods
	such as DPPs, herding and sequential Bayesian quadrature,
	as we did experiments under almost the same setting.
	In particular, in the case $(d, r)=(1, 3)$ where the eigenvalue decay is fast,
	we see that our method substantially outperforms
	the sequential Bayesian quadrature.
	
	\subsection{Measure Reduction in Machine Learning Datasets}\label{sec:ml}
	We used two datasets from UCI Machine Learning
	Repository (\url{https://archive.ics.uci.edu/ml/datasets/}).
	We set $\mu$ as the equally weighted measure over (a subset of) the data points
	$X = (X^{(1)}, \ldots, X^{(d)})^\top$ ($d = 3, 5$, respectively),
	where each entry is centered and normalized.
	We considered the Gaussian kernel $\exp(-\lVert x- y\rVert^2/(2\lambda^2))$
	whose hyperparameter $\lambda$ is determined by {\it median heuristics} \citep{gar17},
	and compared the performance of
	{\bf N. + emp}, {\bf N. + emp + opt} (with $\ell = 10n$, $N = n^2$),
	{\bf Monte Carlo}, {\bf iid Bayes}, {\bf Thinning}, {\bf Thin + opt}.
	We also added {\bf Herding},
	an equally weighted greedy algorithm with global optimization \citep{che10},
	and its weight optimization {\bf Herd + opt} within {\it convex} quadrature
	given by \eqref{eq:wce-general}.
	We conducted the experiment for $n \in \{4, 8, 16, 32, 64, 128\}$.
	
	The first is {\bf 3D Road Network Data Set} \citep{kaul2013building}.
	The original dataset is $3$-dimensional real vectors at $434874$ points.
	To be able to compute the worst-case error \eqref{eq:wce-general}
	efficiently to evaluate each kernel quadrature,
	we used a random subset $\X$
	of size $43487 = \lfloor 434874 / 10\rfloor$
	(fixed throughout the experiment)
	and defined $\mu$ as the uniform measure on it.
	We determined $\lambda$ with the median heuristic
	by using a random subset of $\X$ with size $10000$
	and used the same $\X$ and $\lambda$ throughout the experiment.
	The second is {\bf Combined Cycle Power Plant Data Set} \citep{kaya2012local,tufekci2014prediction}.
	The original dataset is $5$-dimensional real vectors at 9568 points.
	We set the whole data as $\X$
	and defined $\mu$ as the uniform measure on it.
	We determined $\lambda$ with median heuristics
	by using the whole $\X$.
	
	Figure \ref{fig-ml}
	shows the results.
	We can observe that in both experiments {\bf N. + emp + opt} successfully
	exploits the fast spectral decay of Gaussian kernel
	and significantly outperforms
	other methods.
	Also, even without using the knowledge of any expectations,
	{\bf N. + emp} (and {\bf Thinning}) show a decent convergence rate comparable to {\bf Herding}
	or {\bf iid Bayes}, which actually use the additional information.
	See also the end of Section \ref{sec:app-ml} for the plot
	of $\wce(Q_n; \Hil_k, \mu^\prime)$ for another
	set of empirical data $\mu^\prime$.
	
	\section{Concluding Remarks}\label{sec:concl}
	We leveraged a classical measure reduction tool, recombination, with spectral properties of kernels to construct kernel quadrature rules with positive weights. 
	The resulting algorithms show strong benchmark performance despite their restriction to convex weights. Our method has also recently been applied to Bayesian inference problems \citep{ada22}.

	Although our method is applicable to fairly general situations,
	the usage or performance can be limited when it is difficult or inefficient
	to directly sample from the target measure $\mu$.
	Hence, an interesting follow up questions, is how one could replace the i.i.d.~samples with smarter sampling (DPP, importance sampling, etc) before the recombination is carried out.
	Further, our theoretical results do not fully explain the empirical superiority;
	especially the $1/\sqrt{\ell}$ term does not match the experiments and it is a challenging future research question to reduce this theoretical gap.
	Nevertheless, we believe our method is the first generally applicable algorithm
	with a guarantee from the spectral decay.
	
	%%%%%%%%%%%%%%%%%%%%%%%%%%%%%%%%%%%%%%
	%%%%%%%%%%%%%%%%%%%%%%%%%%%%%%%%%%%%%%
	
	%%%%%%%%%%%%%%%%%%%%%%%%%%%%%%%%%%%%%%%%%%%%%%%%%%%%%%%%%%
	%%%%%%%%%%%%%%%%%%%%%%%%%%%%%%%%%%%%%%%%%%%%%%%%%%%%%%%%%%
	
	\begin{ack}
		The authors would like to thank Chris Oates and Toni Karvonen
		for helpful remarks and discussions.
		The authors are also grateful to anonymous reviewers for detailed and constructive discussions that improved the paper.
		Harald Oberhauser and Terry Lyons are supported by
		the DataSıg Program [EP/S026347/1], the Alan Turing Institute [EP/N510129/1], the Oxford-Man Institute, and the Hong Kong Innovation and Technology Commission (InnoHK Project CIMDA).
	\end{ack}
	
	\bibliography{cite}% BibTeX を使う場合
	\bibliographystyle{abbrvnat}%           BibTeX を使う場合

	\newpage

\appendix
	
	\section{Outline of the Appendix}
	Appendix \ref{sec:app-theory} contains general results from which the results presented in the main text, in particular Theorem \ref{thm:main}, follow as special cases. 
	Appendix \ref{sec:proof-app} contains the proofs of these theoretical results and needed technical lemmas. 
	Appendix \ref{app:known expectations} shows that if the expectations $\int \varphi_i(x) \dd\mu(x)$ are known, then this knowledge can be used to further improve the theoretical bounds; it also gives a simple modification of Algorithm \ref{algo:main} doing this efficiently.
	Appendix \ref{app:experiments} provides additional numerical experiments and benchmarks.
	
	\section{Theoretical Results and Remarks}
	\label{sec:app-theory}
	In this section, we present theoretical results that include our main results as a special case.
	The proofs are given in Section \ref{sec:proof-app}.
	
	\paragraph{Notation.}
	For simplicity,
	for a quadrature $Q_n$ given by points $(x_i)_{i=1}^n$
	and weights $(w_i)_{i=1}^n$
	and a probability measure $\mu$,
	we denote the integration of an integrable
	function $f$ on $\X$
	with respect to these measures by
	\[
	Q_n(f) = \sum_{i=1}^n w_if(x_i),
	\qquad
	\mu(f) = \int_\X f(x)\dd\mu(x),
	\]
	respectively.
	We also write the inner-product and norm of an RKHS
	$\Hil_k$ by $\ip{\cdot, \cdot}_{\Hil_k}$
	and $\lVert\cdot\rVert_{\Hil_k}$.
	Furthermore, we use the
	probability simplex $\Delta^n$
	and convex hull $\cv A$ of a set $A\subset\R^d$
	in the proofs:
	\[
	\Delta^n:=\left\{(w_i)_{i=1}^n
	\lmid
	w_i\ge0,\, \sum_{i=1}^n w_i=1\right\},
	\ 
	\cv A \coloneqq \left\{\sum_{i=1}^n w_ia_i
	\lmid
	(w_i)\in\Delta^n,\, a_i\in A,\, n\ge1
	\right\}.
	\]
	
	\subsection{Quantitative Results}
	We work under the following setting
	as in the assumption of Theorem \ref{thm:main}.
	\begin{assp}\label{asp:ker}
		$\mu$ is a Borel probability measure on $\X$,
		and $k$ is a positive semi-definite kernel
		on $\X$ such that $\int_\X k(x,x) \dd\mu(x) < \infty$.
		Further, $k_0$ is a positive semi-definite kernel on $\X$ such that $k_1\coloneqq k-k_0$ is a positive semi-definite  kernel on $\X$.
	\end{assp}
	
	The following is a general result regarding
	a quadrature formula exactly integrating
	functions in $\Hil_{k_0}$.
	
	\begin{thm}\label{thm:unify}
		Under Assumption \ref{asp:ker},
		if an $n$-point convex quadrature $Q_n$ on $\X$ satisfies
		$Q_n(f)=\mu(f)$
		for any $f = k_0(\cdot, x)$ with $x\in\X$,
		we have
		\begin{equation}
			\wce(Q_n; \Hil_k, \mu)
			\le Q_n(g) + \mu(g),
			\label{eq:unify}
		\end{equation}
		where $g$ is the function given by
		$g(x) = \sqrt{k_1(x, x)}$.
		In particular, the following assertions hold
		for such a quadrature $Q_n$:
		\begin{itemize}
			\item[(a)]
			We have
			$\wce(Q_n; \Hil_k, \mu)\le
			2\lVert g\rVert_{\infty}=
			2\sup_{x\in\X}\sqrt{k_1(x, x)}$.
			\item[(b)]
			If we additionally have $Q_n(g)\le \mu(g)$,
			then we have
			$\wce(Q_n; \Hil_k, \mu)\le 2\mu(g)$.
			\item[(c)]
			If we additionally have $Q_n(g^2)
			\le \mu(g^2)$ instead of (b),
			we still have
			\[
			\wce(Q_n;\Hil_k, \mu)^2
			\le 4\int_\X k_1(x, x)\dd\mu(x).
			\]
		\end{itemize}
	\end{thm}
	
	\begin{remark}
		For a Borel probability measure $\nu$ on $\X$
		and a nonnegative function $h:\X\to\R_{\ge0}$,
		we have an inequality
		$\int_\X \sqrt{h(x)}\dd\nu(x)
		\le \left(\int_\X h(x)\dd\nu(x)\right)^{1/2}$,
		so the above $\mu(g)$ can be upper bounded
		by $\int_\X k_1(x, x)\dd\mu(x)$,
		which is equal to the trace of the
		integral operator given by $k_1$.
		Also, the assumption in
		Theorem \ref{thm:unify} can be weakened to
		the integrability of $\sqrt{k(x, x)}$ from the same
		inequality as you can see in the proof.
	\end{remark}
	
	We can combine Theorem \ref{thm:unify}
	with an empirical approximation of $\mu$
	to obtain the following result,
	which essentially implies Theorem \ref{thm:main}.
	\begin{thm}\label{thm:emp-unify}
		Under Assumption \ref{asp:ker},
		let $D_N$ be a set of $N$ independent samples from $\mu$,
		and $\tilde{\mu}_N$ be its  empirical measure,
		i.e., $\tilde{\mu}_N = \frac1N\sum_{y \in D_N}\delta_{y}$.
		Then, if an $n$-point convex quadrature $Q_n$ on $\X$ satisfies
		$Q_n(f)=\tilde{\mu}_N(f)$
		for any $f = k_0(\cdot, x)$ with $x\in\X$,
		we have
		\begin{equation}
			\E{\wce(Q_n; \Hil_k, \mu)^2}
			\le 2\E{(Q_n(g) + \tilde{\mu}_N(g))^2}
			+ \frac{2c_{k,\mu}}N,
			\label{eq:emp-unify}
		\end{equation}
		where
		$g(x)  \coloneqq  \sqrt{k_1(x, x)}$
		and $c_{k,\mu}  \coloneqq  \int_\X k(x, x)\dd\mu(x)
		- \iint_{\X\times\X} k(x, y) \dd\mu(x)\dd\mu(y)$.
		In particular, the following assertions hold
		for such a quadrature $Q_n$:
		\begin{itemize}
			\item[(a)]
			We have
			$\E{\wce(Q_n; \Hil_k, \mu)^2}\le
			8\sup_{x\in\X}k_1(x, x) + 2c_{k,\mu}/N$.
			\item[(b)]
			If we additionally always require
			$Q_n(g)\le \tilde{\mu}_N(g)$,
			then we have
			\[
			\E{\wce(Q_n; \Hil_k, \mu)^2}
			\le 8\int_\X k_1(x, x)\dd\mu(x) + \frac{2c_{k,\mu}}N.
			\]
			The requirement
			$Q_n(g)\le \tilde{\mu}_N(g)$
			can be replaced by
			$Q_n(g^2)\le\tilde{\mu}_{N}(g^2)$
		\end{itemize}
	\end{thm}

	Although we have assumed $k-k_0$
	is positive semi-definite in the previous assertions,
	the uniform bound works without the assumption as follows.
	\begin{prop}\label{prop4}
		Let $\mu$ be a Borel probability measure on $\X$.
		Let $k$ and $k_0$ be positive semi-definite
		kernels on $\X$
		satisfying $\int_\X \sqrt{k(x, x)}\dd\mu(x),
		\int_\X \sqrt{k_0(x, x)}\dd\mu(x)<\infty$.
		If an $n$-point convex quadrature $Q_n$ on $\X$
		satisfies $Q_n(f) = \mu(f)$
		for any $k_0(\cdot, x)$ with $x\in \X$,
		we have
		\[
		\wce(Q_n; \Hil_k, \mu)
		\le 2\sup_{x, y\in \X}
		\sqrt{\lvert k(x, y) - k_0(x, y)\rvert}.
		\]
		Furthermore, if $\dim \Hil_{k_0} < n$,
		there exists
		an $n$-point convex quadrature $Q_n$
		satisfying
		$Q_n(f)=\mu(f)$ for each $f = k_0(\cdot, x)$.
	\end{prop}
	In this paper, we focus on
	the cases where $k_0$ is either given by
	the truncated Mercer decomposition or
	Nystr{\"o}m approximation.
	For many important kernels, however,
	we may also use the random Fourier features \citep{rah07} or its periodic version \citep{tom18}
	which can easily be combined with Proposition \ref{prop4}, but it is beyond the scope of this paper to choose its appropriate variant for each kernel \citep[see][for a list of variants]{liu20}.

	\subsection{Existence Results}
	
	The existence of quadrature formulas
	satisfying the estimate of 
	Theorem \ref{thm:unify} or Theorem \ref{thm:emp-unify}
	is guaranteed when $\dim\Hil_{k_0}<n$.
	\begin{thm}\label{thm:uni-exist}
		Under Assumption \ref{asp:ker},
		if $\dim \Hil_{k_0} < n$,
		there exists
		an $n$-point convex quadrature $Q_n$
		satisfying
		$Q_n(f)=\mu(f)$ for each $f = k_0(\cdot, x)$.
		This still holds even if we additionally require
		$Q_n(g)\le\mu(g)$ or $Q_n(g^2)\le\mu(g^2)$
		for $g(x) = \sqrt{k_1(x, x)}$.
	\end{thm}
	
	\begin{remark}
		This also implies the existence result
		of $Q_n$ satisfying the condition in
		Theorem \ref{thm:emp-unify}
		if we replace $\mu$ by $\tilde{\mu}_N$.
	\end{remark}
	
	The algorithm for constructing a kernel quadrature
	with Theorem \ref{thm:emp-unify} is given
	in the main body, see Algorithm~\ref{algo:main}.
	The ones with Theorem \ref{thm:unify}
	requires further knowledge of
	the expectation of test functions,
	i.e., the values of
	$\int_\X\phi_i(x)\dd\mu(x)$
	with $\Hil_{k_0}\subset
	\mathop\mathrm{span}\{\phi_1, \ldots, \phi_{n-1}\}$.
	Under this additional information,
	we have an algorithm (Algorithm \ref{algo:rchq})
	based on random sampling
	given in the following section.
	
	%%% added ↓↓
	
	\subsection{Eigenvalue estimate for Gaussian kernels}\label{sec:eig-gauss}
	We provide a proof of a folklore estimate on
	the eigenvalues of integral operators given by
	a Gaussian kernel.
	Let $k(x, y)=\exp(-\frac1{2\ell^2}(x - y)^2)$ for an $\ell>0$
	and $x, y\in\R$.
	Then, it has the following expansion
	\citep{min10,kar21b}:
	\begin{equation}
	    k(x, y) = \sum_{m=0}^\infty \phi_m(x)\phi_m(y),
	    \qquad
	    \phi_m(x) = \frac1{\ell^m\sqrt{m!}}x^m\exp\left(-\frac{x^2}{2\ell^2}\right).
	    \label{eq:expansion-gauss}
	\end{equation}
	Let $\mu$ be a Borel probability measure
	supported on a compact domain,
	i.e., $\mu(\{x\in\R\mid \lvert x\rvert > R\})=0$
	for some $R>0$.
	Let us consider the RKHS given by $k$ over $\X := \supp\mu$.
	
	Recall that $\sigma_n$ is the $n$-th eigenvalue of the integral operator
	\[
	    \K: L^2(\mu) \to L^2(\mu);\quad f\mapsto \K f = \int_\X k(\cdot, y)f(y) \dd\mu(y).
	\]
	From the minimax property of eigenvalues of compact Hermitian operators, we have
	\begin{align*}
	    \sigma_n 
	    &= \inf_{g_1, \ldots, g_{n-1}\in L^2(\mu)} \sup_{f\in L^2(\mu)
	    \cap \{g_1,\ldots,g_{n-1}\}^\perp,\, \lVert f\rVert_{L^2(\mu)}=1}
	    \ip{f, \K f}_{L^2(\mu)}\\
	    &\le \sup_{f\in L^2(\mu) \cap \{\phi_0,\ldots,\phi_{n-2}\}^\perp,\, \lVert f\rVert_{L^2(\mu)}=1}
	    \ip{f, \K f}_{L^2(\mu)},
	\end{align*}
	where the orgthogonal complement is taken in terms of $L^2(\mu)$-inner product
	and $\phi_m$ are functions given in \eqref{eq:expansion-gauss}.
	They are indeed in $L^2(\mu)$ as $\mu$ is compactly supported.
	
	Now, let $k_n(x, y):=\sum_{m=n-1}^\infty\phi_m(x)\phi_m(y)$.
	For an $f\in L^2(\mu)
	    \cap \{\phi_0,\ldots,\phi_{n-2}\}^\perp$,
	we have
	\begin{align*}
	    \ip{f, \K f}_{L^2(\mu)}
	    &= \iint_{\X\times\X}f(x)k(x, y)f(y)\dd\mu(y)\dd\mu(x)\\
	    &= \iint_{\X\times\X}f(x)k_n(x, y)f(y)\dd\mu(y)\dd\mu(x)\\
	    &\le \iint_{\X\times\X}f(x)\sqrt{k_n(x, x)}\sqrt{k_n(y, y)}f(y)\dd\mu(y)\dd\mu(x)
	    \tag{positive definiteness}\\
	    &=\left(\int_\X \sqrt{k_n(x, x)}f(x)\dd\mu(x)\right)^2\\
	    &\le \left(\int_\X k_n(x, x)\dd\mu(x)\right)\lVert
	    f\rVert_{L^2(\mu)}^2.
	    \tag{Cauchy--Schwarz}
	\end{align*}
	Therefore, we have the estimate
	$\sigma_n\le \int_\X k_n(x, x)\dd\mu(x)$.
	We have
	\[
	    k_n(x, x)
	    = \sum_{m=n-1}^\infty\frac1{m!}\left(\frac{x}{\ell}\right)^{2m}\exp\left(
	        -\left(\frac{x}{\ell}\right)^2
	    \right),
	\]
	and this can be regarded as the remainder term of the Maclaurin expansion, so
	there is a $\theta\in(0, 1)$ such that
	\[
	    k_n(x, x) = \frac1{(n-1)!}\exp\left(-\theta\left(\frac{x}\ell\right)^2\right)\left(\frac{x}\ell\right)^{2(n-1)}
	    \le \frac{(x/\ell)^{2(n-1)}}{(n-1)!}.
	\]
	In particular, if we have $\lvert x\rvert \le R$ for $\mu$-almost all $x$,
	we have a factorial decay
	$\sigma_n \le \frac{(R/\ell)^{2(n-1)}}{(n-1)!}$.
	
 	%%% added ↑↑
	
	\subsection{Uniform Robustness}\label{app:robustness}
	In applications, the RKHS $\mathcal{H}_k$ may be mis-specified and the quadrature rule $\mu^Q$ when computed for the mis-specified function class $\mathcal{H}_k$ but applied to a function $f \notin \mathcal{H}_k$ leads only to the attainable bound 
	\begin{align*}
		&\left|\int_\X f(x) \dd\mu^Q(x) - \int_\X f(x) \dd\mu(x)
		\right|\\
		&\leq \sup_{x \in \X} \bigl\lvert f(x)
		- \tilde{f}(x)\bigr\rvert
		(|\mu^Q|_{\text{TV}}+ |\mu|_{\text{TV}})
		+ \bigl\lVert \tilde{f}\bigr\rVert_{\Hil_k}\wce(\mu^Q; \Hil_k, \mu) 
	\end{align*}
	via triangle equality and standard integral estimates.
	Note that $|\cdot|_{\text{TV}}$ denotes the total variation norm and the above applies to any $\tilde{f}\in \Hil_k$; in particular, to the best approximation in uniform norm to $f$ in $\Hil_k$. 
	Since $\mu$ is a probability measure, $|\mu|_{\text{TV}}=1$ but if $\mu^Q$ is a signed measure with non-convex weights,
	its total variation $|\mu^Q|_{\text{TV}}$ can be large,
	resulting in arbitrary large integration errors.
	
	\subsection{Equivalence between the projection/matrix Nystr{\"o}m approximations}\label{sec:equiv-nys}
	Let $k$ be a positive semi-definite kernel on $\X$,
	$Z = (z_i)_{i=1}^\ell\subset\X$.
	Let $P_Z:\Hil_k\to\Hil_k$ be the projection operator
	onto $\mathop\mathrm{span}\{k(\cdot, z_i)\mid i=1,\ldots,\ell\}$.
	For arbitrary $x, y\in\X$,
	we can write
	\[
	P_Zk(\cdot, x)=\sum_{i=1}^\ell a_ik(\cdot, z_i),
	\qquad
	P_Zk(\cdot, y)=\sum_{i=1}^\ell b_ik(\cdot, z_i),
	\]
	where $a=(a_i)_{i=1}^\ell, b=(b_i)_{i=1}^\ell\in\R^\ell$.
	From the properties of projection,
	we have
	\[
	k(z_j, x) = \ip{k(\cdot, z_j), k(\cdot, x)}_{\Hil_k}
	=\ip{k(\cdot, z_j), P_Zk(\cdot, x)}_{\Hil_k}
	= \sum_{i=1}^\ell a_ik(z_j, z_i).
	\]
	In matrix notation, we have
	$k(Z, x) = k(Z, Z)a$,
	and $k(Z, y) = k(Z, Z)b$ from the same argument.
	Thus, by conbining it with the property of Moore--Penrose inverse,
	we have
	\begin{align*}
		\ip{P_Zk(\cdot, x), P_Zk(\cdot, y)}_{\Hil_k}
		&= a^\top k(Z, Z)b\\
		&= a^\top k(Z, Z)k(Z, Z)^+k(Z, Z)b \tag{Moore--Penrose}\\
		&= k(x, Z)k(Z, Z)^+ k(Z, y).
	\end{align*}
	This shows the desired equivalence.
	
	\section{Proofs}\label{sec:proof-app}
	
	\subsection{Proof of Theorem \ref{thm:unify}}
	Before proceeding to the proof
	of the theorem,
	we prepare a couple of assertions.
	The following is a well-known
	estimate proven by using the Cauchy--Schwarz inequality
	\citep[see e.g.,][Lemma 3.1 and its proof]{mua17}.
	\begin{prop}\label{prop:c-s}
		Let $k$ be a positive semi-definite kernel on $\X$,
		and $\nu$ be a Borel probability measure
		with $\int_\X \sqrt{k(x,x)}\dd\nu(x) < \infty$.
		Then, for each $f\in\Hil_k$, we have
		\[
		\left\lvert
		\int_\X f(x)\dd\nu(x)
		\right\rvert
		\le \lVert f\rVert_{\Hil_k}
		\int_\X\sqrt{k(x, x)}\dd\nu(x).
		\]
	\end{prop}
	
	By using the proposition,
	we obtain the following technical lemma.
	\begin{lem}\label{lem:k_1}
		Let $k$ and $k_1$ be
		a positive semi-definite kernels
		on $\X$
		such that $k - k_1$
		is also positive semi-definite.
		Let $\nu$ be a Borel probability measure
		on $\X$.
		Then, for any $n\ge1$,
		$a_1,\ldots, a_n\in\R$, $x_1,\ldots, x_n\in\X$,
		if we let $f = \sum_{i=1}^na_ik(\cdot, x_i)$
		and $f_1 = \sum_{i=1}^n a_i k_1(\cdot, x_i)$,
		then we have
		\[
		\left\lvert\int_\X f_1(x)\dd\nu(x)\right\rvert
		\le \lVert f\rVert_{\Hil_k}
		\int_\X \sqrt{k_1(x, x)}\dd\nu(x).
		\]
	\end{lem}
	\begin{proof}
		From the positive semi-definiteness of
		$k_0 \coloneqq k-k_1$, we have
		\begin{align*}
			\lVert f_1\rVert_{\Hil_{k_1}}^2
			= \sum_{i,j=1}^n a_ia_jk_1(x_i, x_j)
			&\le \sum_{i,j=1}^n a_ia_jk_1(x_i, x_j)
			+ \sum_{i,j=1}^n a_ia_jk_0(x_i, x_j)\\
			&= \sum_{i,j=1}^n a_ia_jk(x_i, x_j)
			= \lVert f \rVert_{\Hil_k}^2.
		\end{align*}
		Hence it suffices to prove
		$\lvert\nu(f_1)\rvert \le \lVert f_1\rVert_{\Hil_{k_1}}\nu(g)$ for
		$g(x) \coloneqq \sqrt{k_1(x, x)}$,
		but it directly follows from Proposition
		\ref{prop:c-s}.
	\end{proof}

	\begin{proof}[Proof of Theorem \ref{thm:unify}]
		Note first that, for each $f\in\Hil_{k_0}$,
		$f$ is integrable with respect to $\mu$.
		Indeed, we have
		\[
		\lvert f(x)\rvert
		=\lvert\langle f, k_0(\cdot, x)\rangle_{\Hil_{k_0}}
		\rvert
		\le \lVert f\rVert_{\Hil_{k_0}}
		\lVert k_0(\cdot, x)\rVert_{\Hil_{k_0}}
		= \lVert f\rVert_{\Hil_{k_0}}\sqrt{k_0(x, x)}
		\le \lVert f\rVert_{\Hil_{k_0}}\sqrt{k(x, x)},
		\]
		and it is integrable from assumption,
		so the equality $Q_n(f) = \mu(f)$
		with $f = k_0(\cdot, x)$ is attained at a finite value.
		
		Once we establish \eqref{eq:unify},
		the item (b) is clear, and (a) follows from the fact that
		$Q_n(g)$ and $\mu(g)$ are both integrals
		of the function $g$ with respect to a probability measure. Also, (c) is justified as follows:
		\[
		\wce(Q_n; \Hil_k, \mu)^2
		\le (Q_n(g)+ \mu(g))^2
		\le 2Q_n(g)^2 + 2\mu(g)^2
		\le 2Q_n(g^2) + 2\mu(g^2) \le 4 \mu(g^2),
		\]
		where $\mu(Q_n)^2\le \mu(Q_n^2)$ and
		$\mu(g)^2\le \mu(g^2)$ follows from
		the Cauchy--Schwarz.
		
		To prove \eqref{eq:unify},%\todo{wrong label}
		we first prove
		\begin{equation}
			\lvert Q_n(f) - \mu(f) \rvert
			\le \lVert f\rVert_{\Hil_k}(Q_n(g) + \mu(g))
			\label{eq:fin-comb}
		\end{equation}
		for any $f$ of the form
		$f = \sum_{i=1}^na_ik(\cdot, x_i)$
		with $n\ge0$ and $a_1,\ldots,a_n\in\R$.
		Given such an $f$,
		we have $Q_n(f_0) = \mu(f_0)$
		for $f_0  \coloneqq  \sum_{i=1}^na_ik_0(\cdot, x_i)$
		from the assumption.
		Thus, by letting
		$f_1  \coloneqq  f - f_0 = \sum_{i=1}^na_ik_1(\cdot, x_i)$,
		we have
		\[
		Q_n(f) - \mu(f) = (Q_n(f) - \mu(f))
		- (Q_n(f_0) - \mu(f_0)) = Q_n(f_1) - \mu(f_1).
		\]
		As we have
		$\lvert\nu(f_1)\rvert \le \lVert f\rVert_{\Hil_k}\nu(g)$
		for $\nu = Q_n, \mu$
		from Lemma \ref{lem:k_1},
		we obtain
		$\lvert Q_n(f_1) - \mu(f_1)\rvert
		\le \lVert f \rVert_{\Hil_k}(Q_n(g) + \mu(g))$,
		and so \eqref{eq:fin-comb}
		is shown for $f$ of the form
		$f = \sum_{i=1}^na_ik(\cdot, x_i)$.
		
		Finally, we generalize \eqref{eq:fin-comb}
		to any $f\in\Hil_k$.
		Let $\tilde{f}\in\Hil_k$ can be written
		in the form $\sum_{i=1}^na_ik(\cdot, x_i)$.
		If we let $h(x) = \sqrt{k(x, x)}$,
		from Proposition \ref{prop:c-s},
		we have
		\[
		\lvert Q_n(f - \tilde{f})\rvert
		\le \lVert f-\tilde{f}\rVert_{\Hil_k}Q_n(h),
		\qquad
		\lvert \mu(f - \tilde{f})\rvert
		\le \lVert f-\tilde{f}\rVert_{\Hil_k}\mu(h).
		\]
		Note that $\mu(h)<\infty$
		follows from the integrability of $k(x, x)$
		in Assumption \ref{asp:ker}.
		Therefore, we have
		\begin{align*}
			\lvert Q_n(f) - \mu(f) \rvert
			&\le \lvert Q_n(\tilde{f}) - \mu(\tilde{f})\rvert + 
			\lvert Q_n(f-\tilde{f}) - \mu(f-\tilde{f})\rvert
			\\
			&\le
			\lVert \tilde{f}\rVert_{\Hil_k}(Q_n(g) + \mu(g)) +
			\lVert f-\tilde{f}\rVert_{\Hil_k}(Q_n(h) + \mu(h))
			\\
			&\le
			\lVert f \rVert_{\Hil_k}(Q_n(g) + \mu(g)) + 
			\lVert f-\tilde{f}\rVert_{\Hil_k}
			(Q_n(g) + \mu(g) + Q_n(h) + \mu(h)).
		\end{align*}
		As we can make $\lVert f-\tilde{f}\rVert_{\Hil_k}$
		arbitrarily small from the definition of $\Hil_k$,
		the proof of \eqref{eq:unify} is completed
		by taking the limit.
	\end{proof}
	
	\subsection{Proof of Theorem \ref{thm:emp-unify}}
	\begin{proof}
		Denote $D_N=\{y_1,\ldots,y_N\}$ and note that the result follows
		from \eqref{eq:emp-unify}
		and
		\begin{equation}
			\E{\tilde{\mu}_N(g)^2}
			=\E{\tilde{\mu}_N(g^2)}
			\le
			\E{
				\frac1N\sum_{i=1}^Ng(y_i)^2
			}
			= \int_\X k_1(x, x)\dd\mu(x),
			\label{eq:emp-surro}
		\end{equation}
		where the first inequality is given by
		the Cauchy--Schwarz.
		
		Indeed, (a) is an immediate consequence
		of \eqref{eq:emp-unify}
		and $Q_n$ and $\tilde{\mu}_N$
		making a probability measure,
		and (b) is obtained as
		$   2\E{(Q_n(g) + \tilde{\mu}_N(g))^2}
		\le 8\E{\tilde{\mu}_N(g)^2}
		\le 8\int_\X k_1(x, x)\dd\mu(x)$
		by using \eqref{eq:emp-surro} and
		the requirement $Q_n(g)\le \tilde{\mu}_N(g)$.
		
		When the requirement is
		$Q_n(g^2)\le \tilde{\mu}_N(g^2)$,
		as we have $Q_n(g)^2\le Q_n(g^2)$
		and $\tilde{\mu}_N(g)^2\le \tilde{\mu}_N(g^2)$
		by the Cauchy--Schwarz, we also have by the AM--GM,
		\begin{align*}
			2\E{(Q_n(g) + \tilde{\mu}_N(g))^2}
			&\le 4\E{Q_n(g)^2} + 4\E{\tilde{\mu}_N(g)^2}\\
			&\le 4\E{Q_n(g^2)} + 4\E{\tilde{\mu}_N(g^2)}\\
			&\le 8\E{\tilde{\mu}_N(g^2)}
			\le 8\int_\X k_1(x, x)\dd\mu(x)
		\end{align*}
		
		For showing \eqref{eq:emp-unify},
		we remark that
		we always have
		\begin{equation}
			\wce(Q_n; \Hil_k, \tilde{\mu}_N)
			\le Q_n(g) + \tilde{\mu}_N(g)
			\label{eq:q_n+mu_n}
		\end{equation}
		by applying Theorem \ref{thm:unify}
		with $\tilde{\mu}_N$ instead of $\mu$.
		
		Let
		$h(\mu), h(\tilde{\mu}_N), h(Q_n) \in \Hil_k$ be
		the kernel mean embeddings of
		$\mu$, $\tilde{\mu}_N$ and $\mu^{Q_n}$,
		i.e.,
		\[
		h(\mu)  \coloneqq  \int_\X k(\cdot, x)\dd\mu(x),
		\quad
		h(\tilde{\mu}_N)
		\coloneqq  \frac1N\sum_{i = 1}^N k(\cdot, y_i),
		\quad
		h(Q_n)
		\coloneqq  \sum_{i=1}^n w_ik(\cdot, x_i),
		\]
		where $(w_i)_{i=1}^n$ and $(x_i)_{i=1}^n$
		are weights and points defining
		the quadrature $Q_n$.
		Remark that $h(\mu)$ is well-defined
		as $\int_\X k(x, x)\dd\mu(x)<\infty$
		\citep[][Lemma 3.1]{mua17}.
		As we can rewrite the worst-case error as
		\[
		\wce(Q_n;\Hil_k,\tilde{\mu}_N)
		= \lVert
		h(Q_n) - h(\tilde{\mu}_N)
		\rVert_{\Hil_k},
		\qquad
		\wce(Q_n;\Hil_k,\mu)
		= \lVert
		h(Q_n) - h(\mu)
		\rVert_{\Hil_k},
		\]
		by triangle inequality and the AM--GM,
		we obtain
		\begin{align*}
			\E{\wce(Q_n;\Hil_k,\mu)^2}
			&\le \E{(\wce(Q_n;\Hil_k,\tilde{\mu}_N)
				+ \lVert h(\mu) - h(\tilde{\mu}_N) \rVert_{\Hil_k})^2}\\
			&\le 2\E{\wce(Q_n;\Hil_k,\tilde{\mu}_N)^2}
			+ 2\E{\lVert h(\mu) - h(\tilde{\mu}_N) \rVert_{\Hil_k}^2}\\
			&\le 2\E{(Q_n(g) + \tilde{\mu}_N(g))^2}
			+ 2\E{\lVert h(\mu) - h(\tilde{\mu}_N) \rVert_{\Hil_k}^2},
		\end{align*}
		where we have used \eqref{eq:q_n+mu_n}
		in the last inequality.
		It now suffices to prove
		$\E{\lVert h(\mu) - h(\tilde{\mu}_N) \rVert_{\Hil_k}^2} = c_{k, \mu}/N$
		for showing \eqref{eq:emp-unify}.
		
		Indeed, we have
		\begin{align*}
			&\E{\lVert h(\mu) - h(\tilde{\mu}_N) \rVert_{\Hil_k}^2}
			= \E{\lVert
				h(\mu)
				\rVert_{\Hil_k}^2}
			-2
			\E{\ip{h(\mu), h(\tilde{\mu}_N)}_{\Hil_k}}
			+
			\E{\lVert
				h(\mu)
				\rVert_{\Hil_k}^2}
			\\
			&=
			\iint_{\X\times\X}k(x, y)\dd\mu(x)\dd\mu(y)
			- \frac2N \sum_{i=1}^N \int_\X \E{k(x, y_i)}\dd\mu(x)
			+
			\frac1{N^2}\sum_{i,j=1}^N \E{k(y_i, y_j)}
			\\
			&=\frac1{N^2}\sum_{i=1}^N\E{k(y_i, y_i)}
			+\left(1 - 2 + \frac{N(N-1)}{N^2} \right)\iint_{\X\times\X}k(x, y)\dd\mu(x)\dd\mu(y)
			=\frac{c_k}N,
		\end{align*}
		since $\iint_{\X\times\X}k(x, y)\dd\mu(x)\dd\mu(y)
		= \int_\X\E{k(x, y_i)}\dd\mu(x)
		= \E{k(y_i, y_j)}$ holds for $i\ne j$.
		Thus, the proof is completed.
	\end{proof}
	
	\subsection{Proof of Proposition \ref{prop4}}
	\begin{proof}
		As $Q_n$ exactly integrates the functions in $\Hil_{k_0}$,
		we have $\wce(Q_n; \Hil_{k_0}, \mu) = 0$.
		So, if we set $Q_n(f) = \sum_{i=1}^n w_if(x_i)$,
		then we have, from \eqref{eq:wce-general}
		with kernel $k_0$,
		\begin{equation}
			0 = \sum_{i,j=1}^n w_iw_jk_0(x_i, x_j)
			-2 \sum_{i=1}^n w_i\int_\X k_0(x_i, y)\dd\mu(y)
			+ \iint_{\X\times\X} k_0(x, y)\dd\mu(x)\dd\mu(y).
			\label{eq:k0-wce}
		\end{equation}
		If we extract this from the formula \eqref{eq:wce-general}
		for the kernel $k$,
		we have, by letting $k_1  \coloneqq  k - k_0$,
		\begin{align*}
			\wce(Q_n; \Hil_k, \mu)^2
			&= \wce(Q_n; \Hil_k, \mu)^2 - \wce(Q_n; \Hil_{k_0}, \mu)^2\\
			&= \sum_{i,j=1}^n w_iw_jk_1(x_i, x_j)
			-2 \sum_{i=1}^n w_i\int_\X k_1(x_i, y)\dd\mu(y)\\
			&\quad + \iint_{\X\times\X} k_1(x, y)\dd\mu(x)\dd\mu(y).
		\end{align*}
		So, if we define
		$M \coloneqq \sup_{x\in\X}\lvert k_1(x, y)\rvert
		= \sup_{x\in\X}\lvert k(x, y) - k_0(x, y)\rvert$,
		we have
		\[
		\wce(Q_n; \Hil_k, \mu)^2
		\le \left(\sum_{i,j=1}^nw_iw_jM
		+ 2\sum_{i=1}^nw_i\int_\X M\dd\mu(y)
		+\iint_{\X\times\X}M\dd\mu(x)\dd\mu(y)
		\right)=4M,
		\]
		as $Q_n$ is a convex quadrature.
		The existence follows from almost the same proof as
		in the proof of Theorem \ref{thm:unify},
		but in this case it directly follows from
		Tchakaloff's thorem \citep{tch57,bay06}.
	\end{proof}
	
	\subsection{Proof of Theorem \ref{thm:uni-exist}}\label{sec:proof:uni-exist}
	\begin{proof}
		We prove the existence of the
		version $Q_n(g)\le\mu(g)$.
		The other follows just by
		replacing every $g$ in the proof below by $g^2$.
		
		Let $\phi_1, \ldots, \phi_{n-1}\in \Hil_{k_0}$
		satisfy $\Hil_{k_0} = \mathop\mathrm{span}
		\{\phi_1, \ldots, \phi_{n-1}\}$.
		Also, let $y, y_1, y_2, \ldots$ be independent samples from $\mu$.
		Now, consider the vector-valued function
		$\bm\psi = (\phi_1, \ldots, \phi_{n-1}, g)^\top\in\R^n$.
		Note that $\E{\lVert\bm\psi(y)\rVert}<\infty$
		follows from the integrability of elements in $\Hil_{k_0}$
		and $g$ with respect to $\mu$.
		Therefore, by
		\cite[][Theorem 11]{hayakawa-MCCC},
		with probability $1$,
		there exists an $N$ such that
		$\E{\bm\psi(y)} \in
		\cv\{\bm\psi(y_1), \ldots, \bm\psi(y_N)\}$.
		So, in particular, there exist deterministic points
		$x_1, \ldots, x_N\in \X$ satisfying
		$\E{\bm\psi(y)} \in
		\cv\{\bm\psi(x_1), \ldots, \bm\psi(x_N)\}$.
		For such $(x_i)_{i=1}^N$,
		consider an optimal solution that is also
		a {\it basic} feasible solution of
		the following linear programming problem:
		\begin{equation}
			\begin{array}{rl}
				\text{minimize} & \displaystyle\sum_{i=1}^N w_i
				g(x_i) \\
				\text{subject to} & 
				\left[
				\bm\phi(x_1) \cdots \bm\phi(x_N)
				\right]
				\displaystyle\bm{w} = \int_\X\bm\phi(x)\dd\mu(x),\ \bm{w}\ge\bm0,
			\end{array}
			\label{eq:lp--}
		\end{equation}
		where $\bm\phi =
		(1, \phi_1, \ldots, \phi_{n-1})^\top\in\R^n$
		is another vector-valued function
		(note that its first coordinate is constant so that
		any feasible solution of \eqref{eq:lp--} sums up to one).
		Such a basic solution $\bm{w}$ has at most
		$n$ nonzero entries, say
		$(w_{i_1}, \ldots, w_{i_n})\in \Delta^n$ with
		$1\le i_1 < \cdots < i_n \le N$.
		Then,
		the quadrature $Q_n$ given by weights $(w_{i_j})_{j=1}^n$
		and points $(x_{i_j})_{j=1}^n$
		satisfies $Q_n(\bm\phi) = \mu(\bm\phi)$
		and $Q_n(g)\le \mu(g)$.
		The latter follows from the optimality of
		$\bm{w}$ and the fact that $\E{\psi(y)}\in\cv\{
		\psi(y_1), \ldots, \psi(y_N)\}$
		leads to a feasible solution with the objective
		$\E{g(y)} = \mu(g)$).
	\end{proof}
	
	\subsection{Proof of Theorem \ref{thm:nys-op-norm}}
	\label{sec:proof-nys}
	We prove the theorem
	by using an existing bound
	regarding the Nyst{\"o}m approximation for matrices,
	which is more common in the machine learning literature.
	
	Let $A = (A_{ij})_{i,j=1}^N
	\in\R^{N\times N}$ be a symmetric
	positive semi-definite matrix.
	Let us denote it as
	$A = [\bm{a}_1, \ldots, \bm{a}_N]$ by using
	$\bm{a}_1,\ldots,\bm{a}_N\in\R^N$.
	Then, we independently sample $i_1, \ldots, i_\ell$
	from $\{1, \ldots, N\}$ uniformly,
	and construct a submatrix
	$B = (a_{i_ji_k})_{j,k=1}^\ell$.
	If we let $B_s$ be the best rank-$s$ approximation
	of $B$ and $B_s^+$ its pseudoinverse,
	the matrix
	\begin{equation}
		\tilde{A}
		= [\bm{a}_{i_1}, \ldots, \bm{a}_{i_\ell}]
		B_s^+
		\left[
		\begin{array}{c}
			\bm{a}_{i_1}\\
			\vdots \\
			\bm{a}_{i_\ell}
		\end{array}
		\right]
		\label{eq:mat-nys}
	\end{equation}
	works as a rank-$s$ approximation of $A$.
	
	We use the following result
	on this matrix version:
	\begin{prop}[{\citep[][Theorem 2]{kum12}}]
		\label{prop-kum-mat}
		For a positive semi-definite matrix $A$,
		the rank-$s$ approximation $\tilde{A}$
		given above satisfies,
		with probability at least $1-\delta$,
		the following:
		\[
		\lVert A - \tilde{A} \rVert_2
		\le
		\lVert A - A_s \rVert_2
		+\frac{2N}{\sqrt{\ell}}
		A_{\max} \left(
		1 + \sqrt{
			\frac{
				D^A_{\max}
			}{A_{\max}}
			\frac{N-\ell}{N-1/2}
			\frac1{\beta(\ell, N)}
			\log\frac1\delta
		}
		\right),
		\]
		where $\beta(\ell, N) =
		1 - \frac1{2\max\{\ell, N-\ell\}}$,
		$A_{\max} = \max_iA_{ii}$,
		$D^A_{\max} = \max_{i,j}
		(A_{ii} + A_{jj} - 2A_{ij})$
		and $A_s$ is the best rank-$s$ approximation
		of $A$.
	\end{prop}
	
	As $D^A_{\max} \le 2 A_{\max}$,
	if we have $N \ge 2\ell$, it holds that
	\[
	\frac{D^A_{\max}
	}{A_{\max}}
	\frac{N-\ell}{N-1/2}
	\frac1{\beta(\ell, N)}
	\le
	2\frac{N-\ell}{N-1/2}\frac{N-\ell-1/2}{N-\ell}\le 2,
	\]
	and we can just state
	\begin{equation}
		\lVert A - \tilde{A} \rVert_2
		\le
		\lVert A - A_s \rVert_2
		+\frac{2N}{\sqrt{\ell}}
		A_{\max} \left(
		1 + \sqrt{
			2
			\log\frac1\delta
		}
		\right).
		\label{eq:nys-mat}
	\end{equation}
	
	We show the following lemma
	as a consequence of this proposition.
	\begin{lem}
		Let $s \le \ell$ be positive integers and
		$\delta > 0$.
		Let $k:\X\times\X$
		be a symmetric and positive semi-definite kernel
		and $y_1, y_2, \ldots$ be i.i.d.~random variables
		taking values in $\X$.
		For each $N$, define
		the $N\times N$ matrices
		$K(N), K_s(N), K_s^Z(N)$
		by
		\[
		K(N)_{ij} = \frac{k(y_i, y_j)}N,
		\quad
		K_s(N)_{ij} = \frac1N \sum_{m = 1}^s \sigma_m
		e_m(y_i)e_m(y_j),
		\quad
		K^Z_s(N)_{ij} = \frac{k^Z_s(y_i, y_j)}N,
		\]
		where $Z = (y_1, \ldots, y_\ell)$.
		
		Then, there exists a sequence $\ve_N \to 0$
		such that
		\begin{equation}
			\lVert K(N) - K^Z_s(N) \rVert_2
			\le
			\lVert K(N) - K_s(N) \lVert_2
			+ \frac{2\sup_{x}k(x, x)}{\sqrt{\ell}}
			\left( 1 + \sqrt{2\log \frac1\delta} \right)
			\label{kum-ineq}
		\end{equation}
		is met with probability at least
		$1 - \delta - \ve_N$.
	\end{lem}
	\begin{proof}
		We assume $N\ge2\ell$.
		Let $i_1, \ldots, i_\ell$ be
		independent uniform samples from
		$\{1, \ldots, N\}$.
		Consider the event $E_N$ that $i_1,\ldots,i_\ell$
		are all different.
		Then, $\P{E_N}=\prod_{i=1}^\ell\frac{N+1-i}N$
		converges to $1$ as $N\to\infty$,
		and let $\ve_N = 1 - \P{E_N}$.
		By using Proposition
		\ref{prop-kum-mat}, \eqref{kum-ineq}
		and $\max_i K(N)_{ii} \le N^{-1}\sup_{x}k(x,x)$,
		we have that the probability
		\[
		\P{
			\lVert K(N) - \tilde{K}_s(N) \rVert_2
			\le
			\lVert K(N) - K_s(N) \lVert_2
			+ \frac{2\sup_{x}k(x, x)}{\sqrt{\ell}}
			\left( 1 + \sqrt{2\log \frac1\delta} \right)
			\lmid E_N
		}
		\]
		is at least $(1-\delta-\ve_N) / \P{E_N}
		\ge 1 - \delta - \ve_N$,
		where $\tilde{K}_s(N)$
		is the rank-$s$ Nystr{\"o}m approximation
		of the matrix $K(N)$ by using indices
		$i_1, \ldots,i_\ell$.
		From \eqref{eq:mat-nys},
		if we take
		$\tilde{W} = k(y_{i_j}, y_{i_k})_{j,k=1}^\ell$
		and $\tilde{W}_s$ its best rank-$s$ approximation,
		it actually satisfies
		\[
		\tilde{K}_s(N)_{ij}
		= \frac1N k(y_i, D)\tilde{W}^+_sk(D, y_j)
		= \frac1N k^D_s(y_i, y_j),
		\]
		where $D = (y_{i_1}, \ldots, y_{i_\ell})$
		and $k_s^D$ is the Nystr{\"o}m
		approximation given in the main body.
		
		As $y_1, \ldots, y_N$ are i.i.d.~samples,
		we can see that
		$(Z, (y_i)_{i=1}^N)$ (without any conditioning)
		and
		$(D, (y_i)_{i=1}^N)$
		conditioned on $E_N$ actually
		have the same distribution,
		so we are done.
	\end{proof}
	
	We finally prove the result for
	the Nystr{\"o}m approximation
	of integral operators.
	
	\begin{proof}[Proof of Theorem \ref{thm:nys-op-norm}]
		Take a sufficiently large $N$
		and let us use $K(N), K_s(N), K_s^Z(N)$
		defined in the previous lemma
		with
		$y_1, y_2, \ldots$ independently sampled from $\mu$.
		
		It suffices to
		consider the case
		$C_k  \coloneqq  \sup_{x\in \X} k(x, x) < \infty$.
		It is clear that
		$K_s(N)_{ii} \le K(N)_{ii} \le C_k / N$,
		and from \eqref{nys-s}, we also have
		\[
		k_s^Z(x, x) = k(x, Z)W^+_s k(Z, x)
		\le k(x, Z) W^+ k(Z, x)
		= \lVert P_Zk(\cdot, x)\rVert_{\Hil_k}^2
		\le \lVert k(\cdot x)\rVert_{\Hil_k}^2
		= k(x, x),
		\]
		and so $K^Z_s(N)_{ii} \le C_k / N$.
		
		For a matrix $A(N)\in\R^{N\times N}$
		defined by $A(N)_{ij} = (1-\delta_{ij})(K(N) - K_s^Z(N))$,
		i.e., the matrix given by deleting the diagonal,
		we have
		$\lVert A(N) \rVert_2 \to
		\lVert \mathcal{K}_s^Z - \mathcal{K} \rVert$
		as $N \to\infty$ almost surely \citep[Theorem 3.1]{kol00}.
		Since we have observed that
		$\lVert K(N) - K_s^Z(N) - A(N) \rVert_2 \le C_k / N$,
		we have
		\[
		\lVert K(N) - K_s^Z(N) \rVert_2 \to
		\lVert \mathcal{K}_s^Z - \mathcal{K} \rVert,
		\qquad
		N\to\infty
		\]
		almost surely.
		The same argument yields
		$\lVert K(N) - K_s(N) \rVert_2 \to \sigma_{s+1}$,
		as it converges to the norm of the integral operator
		given by the kernel
		$\sum_{m\ge s+1}\sigma_me_m(x)e_m(y)$.
		
		Now, by letting $A_N$ be the event
		that \eqref{kum-ineq} holds (so $\P{A_N} \ge 1 - \delta - \ve_N$),
		the desired inequality \eqref{wtp1} almost surely holds under
		the event $\limsup A_N
		= \bigcap_{N > \ell}\bigcup_{M \ge N} A_M$.
		Indeed, under this event we can just take
		the limit of both sides of \eqref{kum-ineq}
		for an appropriate subsequence of
		$(2\ell, 2\ell+1, \ldots)$.
		As we have
		\[
		\P{\limsup A_N}
		= \lim_{N\to\infty}
		\P{\bigcup_{M \ge N}A_N} \ge
		\lim_{N\to\infty} (1 - \delta - \ve_N)
		= 1 - \delta,
		\]
		the proof is completed.
	\end{proof}

	\section{Kernel Quadrature when Expectations are Known}\label{app:known expectations}
	When we use an approximate kernel $k_0$
	and know exact expectation of test functions
	$\phi_1, \ldots, \phi_{n-1}$
	with
	$\Hil_{k_0}\subset
	\mathop\mathrm{span}\{\phi_1,\ldots,\phi_{n-1}\}$,
	we can obtain an $n$-point kernel quadrature
	that exactly integrates $\phi_1,\ldots,\phi_{n-1}$
	by Algorithm \ref{algo:rchq}.
	\begin{algorithm}[H]
		\caption{Kernel Quadrature with Random Convex Hulls}\label{algo:rchq}
		\begin{algorithmic}[1]
			\Require{A positive semi-definite kernel $k$ on $\X$, a probability measure $\mu$ on $\X$, integers $N \ge n \ge1$,
				another kernel $k_0$
				and functions $\phi_1,\ldots,\phi_{n-1}$
				on $\X$
				with $\Hil_{k_0}\subset \mathop\mathrm{span}
				\{\phi_1,\ldots,\phi_{n-1}\}$}
			\Ensure{With some probability,
				returns $Q_n  \coloneqq  \{(w_i,x_i)\mid i=1,\ldots,n\}\subset \R \times \X$
				with $(w_i)\in\Delta^n$
			}
			\State{Calculate the expectations $\int_\X\phi_1(x)\dd\mu(x), \ldots, \int_\X\phi_{n-1}(x)\dd\mu(x)$}\label{state:expectations}
			\State{Sample $y_1,\ldots,y_N$
				independently from $\mu$}\label{state:sample}
			\State{For a vector-valued function
				$\bm\phi = (\phi_1, \ldots, \phi_{n-1})^\top$
				and $k_{1,\mathrm{diag}}(x)
				=k(x, x)-k_0(x, x)$,
				solve the linear programming problem
				($\lvert\cdot\rvert_0$ denotes the number of nonzero entries)
				\begin{equation}
					\begin{array}{rl}
						\text{minimize} & \bm{w}^\top
						k_{1, \mathrm{diag}}(\bm{x}) \\
						\text{subject to} & 
						\left[
						\bm\phi(y_1) \cdots \bm\phi(y_N)
						\right]
						\displaystyle\bm{w} = \int_\X\bm\phi(x)\dd\mu(x),\\
						& \bm{w}\ge\bm0,\ \bm1^\top\bm{w} = 1,
						\ \lvert\bm{w}\rvert_0\le n.
					\end{array}
					\label{eq:lp}
				\end{equation}
				to obtain points $\{x_1, \ldots, x_n\}\subset
				\{y_1, \ldots, y_N\}$
				and weights $(w_i)\in\Delta^n$ satisfying
				\[
				\sum_{i=1}^nw_i\bm\phi(x_i) = \int_\X \bm\phi(x) \dd\mu(x)
				\]}\label{state:LP}
			if \eqref{eq:lp} is feasible.
		\end{algorithmic}
	\end{algorithm}
	We make several remarks on this algorithm.
	First, the problem \eqref{eq:lp}
	is, strictly speaking, not a linear programming (LP),
	as it includes the sparsity constraint
	$\lvert\bm{w}\rvert_0\le n$.
	However, as it only contains $n$ equality constraints,
	its {\it basic feasible solution} always satisfies
	$\lvert\bm{w}\rvert_0\le n$ and
	the simplex algorithm automatically gives
	such a sparse (and optimal)
	solution even if we do not explicitly
	impose this constraint,
	so we call it an LP for simplicity.
	Second, {\it this algorithm occasionally fails
		to output $Q_n$}
	as, with some probability,
	the LP has no feasible solution.
	Although we can repeat the algorithm
	until we succeed, the number $N$ should
	be chosen appropriately.
	See Remark \ref{rem:mccc} for this point.
	Finally, our algorithm has possibly related
	approaches such as sparse optimization
	and Sard's method,
	see Remark \ref{rem:subsampling} and \ref{rem:sard}.
	
	\begin{remark}\label{rem:mccc}
		A simple approach for constructing a quadrature formula
		\citep{hayakawa-MCCC} was recently proposed:
		randomly sample candidate points and find a solution by using a linear programming (LP) solver.
		Indeed, for an independent sample $y_1, \ldots, y_N \sim \mu$,
		we can construct a quadrature formula with convex weights
		exactly integrating the functions in
		$\F = \mathop\mathrm{span}\{\phi_1, \ldots, \phi_{n-1}\}$ using a subset of these points
		if and only if we have
		\begin{equation}
			\int_\X\bm\phi(x)\dd\mu(x) \in \cv \{ \bm\phi(y_1), \ldots, \bm\phi(y_N)\},
			\label{eq-cv}
		\end{equation}
		where $\bm\phi=(\phi_1, \ldots, \phi_{n-1})^\top:\X\to\R^{n-1}$
		and $\cv A$ denotes the convex hull of $A$.
		Several sharp estimates for the probability of the event \eqref{eq-cv}
		are available in \citet{hayakawa21a}.
		Under the event \eqref{eq-cv}, we can find a desired rule by using the simplex method 
		for the LP problem \eqref{eq:lp}.
	\end{remark}%\todo{k(X,X)? see Nystrom}
	\begin{remark}\label{rem:subsampling}
		From the viewpoint of subsampling, a direct way to obtain quadrature formulas with convex weights
		supported on a small number of points, is to first sample $N$ candidate points $D_N = (x_1, \ldots, x_N)$ and then solve the following sparse optimization problem:
		\begin{equation}
			\begin{array}{rl}
				\text{minimize} & \bm{w}^\top k(D_N, D_N) \bm{w}
				- 2\bm{w}^\top \int_\X k(D_N, y)\dd\mu(y) \\
				\text{subject to} & 
				\bm{w}\ge\bm0,
				\ \bm{1}^\top\bm{w} = 1,\ \lvert\bm{w}\rvert_0 \le n,
			\end{array}
			\label{sparse-qp}
		\end{equation}
		where $k(D_N, D_N)$ is the corresponding $N \times N$ Gram matrix.
		Unfortunately, exactly solving this problem is computationally challenging, in particular in contrast to our approach that exploits the spectral properties of $k$ and $\mu$.
		Nevertheless, one could use sparse optimization to obtain an approximate solution of \eqref{sparse-qp}: although the simplex constraint ($\bm{w}\ge\bm0$, $\bm{1}^\top\bm{w} = 1$) makes it impossible to exploit the classical $\ell_1$ regulatization, there are possible alternatives under this constraint \citep{pil12,kyr13,li20} or use the DC (difference of convex functions) algorithm to incorporate the sparsity constraint to find a local minima \citep{got18}.
		This is a promising research direction, and our general sample estimates might provide a first step towards this direction.
	\end{remark}
	\begin{remark}\label{rem:sard}
		Sard's method \citep{sar49,lar70} for constructing numerical integration rules
		uses the $n$ degree of freedom (of choosing weights in our setting)
		separately;
		$m$ ($\le n$) for exactness over a certain $m$-dimensional space
		of test functions,
		and the remaining $n - m$ for minimizing an error criterion
		such as the worst-case error.
		In the context of kernel quadrature,
		one way to use Sard's method with exactness over $\F$
		(an $m$-dimensional space of test functions) is as follows
		\citep{kar18,sou20}:
		\begin{equation}
			\begin{array}{rl}
				\text{minimize} & \bm{w}^\top k(D_n, \bm{x}_n) \bm{w}
				- 2\bm{w}^\top \int_\X k(\bm{x}_n, y)\dd\mu(y) \\
				\text{subject to} & 
				\bm{w}^\top f(D_n) = \int_\X f(y)\dd\mu(y),\ \forall f\in\F,
			\end{array}
			\label{sard}
		\end{equation}
		where $D_n = (x_1, \ldots, x_n)^\top$,
		$f(D_n) = (f(x_1), \ldots, f(x_n))^\top$.
		This amounts to solving a convex quadratic programming for $\bm{w}$ in an $(n - m)$-dimensional subspace of $\R^n$ (without constraint).
		This is similar to our approach in that it enforces exactness in a certain finite-dimensional space of test functions.
		One key difference is that Sard's approach aims for a quadrature formula on a given set of points, whereas our method determines also the points themelves.
		Hence, the combination of these two approaches seems to be an interesting future research topic.\footnote{For example, we can pick the first $m$ eigenfunctions of the integral operator as test functions, and find $n$ points and weights that minimizes the worst-case error while exactly integrating the test functions from a larger set of candidate points.
			An obvious challenge is that a quadratic programming does not supply sparsity, whereas the approach of this paper has been fully based on the sparsity of a basic feasible solution of an LP problem.}
	\end{remark}
	
	\paragraph{Computational complexity.}
	A tricky part of this approach,
	essentially based on random convex hulls,
	is that the algorithm possibly
	does not output a quadrature formula.
	Hence, the following quantity
	plays an important role to estimate
	the essential complexity of the algorithm:
	\[
	N_\phi = \inf\left\{N\ge1\lmid
	\P{
		\E{\bm\phi(y)}
		\in\cv\{\bm\phi(y_1),
		\ldots,\bm\phi(y_N)\}}\ge\frac12\right\},
	\]
	where $y, y_1, y_2,\ldots$ are independent samples
	from $\mu$.
	This value is known to be finite
	and estimated under a variety of conditions
	on $\bm{\phi}(y)$ \citep{wag01,hayakawa21a}.
	If we have some knowledge of $\mu$,
	we can just keep trying the algorithm with $N=N_\phi$
	until it succeeds,
	and its expected computational time is
	$\ord{nN_\phi + C(n, N_\phi)}$,
	where $C(a, b)$ is the (expected)
	cost of solving an $a\times b$
	LP with a simplex method.
	Note that, though the worst-case computational time
	of the simplex method is exponential,
	it is empirically $\ord{ab\min\{a, b\}}$
	in practice \citep{pan85,sha87}.
	In addition, $N_\phi = \ord{n}$ holds
	in examples with some symmetry
	\citep{wen62,hayakawa-MCCC},
	so in that case
	we have a heuristic complexity estimate
	of $\ord{n^3}$.
	
	\paragraph{Choice of approximate kernels.}
	Similarly to the empirical version
	discussed in the main text,
	we prove quantitative estimates when $k_0$ is given
	by the Mercer approximation
	or Nystr{\"o}m approximation.
	Remark that the necessary information
	for using these methods is different.
	Whereas using the Mercer approximation
	requires the knowledge
	of Mercer decomposition $k(x, y)=\sum_{m=1}^\infty
	\sigma_me_m(x)e_m(y)$
	and their exact integration
	$\int_\X e_m(x)\dd\mu(x)$,
	the Nystr{\"o}m approximation only requires
	the exact integral values of kernel,
	$\int_\X k(x, y)\dd\mu(y)$,
	and so is more generally applicable.
	See the following sections for details.
	
	In the following, we assume that the kernel
	attains the Mercer decomposition
	$k(x, y)=\sum_{m=1}^\infty\sigma_me_m(x)e_m(y)$,
	where $\sigma_1\ge\sigma_2\ge\cdots\ge0$
	and $(e_m)_{m=1}^\infty$ is an orthonormal set
	of $L^2(\mu)$.
	
	\subsection{Algorithm \ref{algo:rchq} with
		Mercer Approximation}\label{sec:uni}
	If we use the truncated Mercer decomposition
	as an approximate kernel,
	we have the following result.
	\begin{thm}\label{thm:app-mer}
		If Algorithm \ref{algo:rchq}
		with $k_0\coloneqq
		\sum_{m=1}^{n-1}\sigma_me_m(x)e_m(y)$
		and $\phi_i= e_i$
		successfully outputs a convex quadrature $Q_n$,
		then it satisfies the following:
		\begin{itemize}
			\item[(a)]
			If $C\coloneqq\sup_{m\ge1}\lVert
			e_m\rVert_\infty < \infty$, we have
			$\wce(Q_n; \Hil_k, \mu)^2 \le 4C^2
			\sum_{m=n}^\infty\sigma_m$.
			\item[(b)]
			As $N$ in Algorithm
			\ref{algo:rchq} tends to infinity,
			we have
			\[
			\P{\wce(Q_n;\Hil_k, \mu)^2
				\le 4\sum_{m=n}^\infty\sigma_m}
			\to1.
			\]
		\end{itemize}
	\end{thm}
	
	\begin{proof}
		As in the proof of Corollary \ref{main-bdd},
		$k-k_0$ is positive semi-definite.
		Thus, when $\sum_{m=n}^\infty\sigma_m<\infty$,
		the kernel
		and the measure $\mu$
		satisfies Assumption \ref{asp:ker}.
		So Thorem \ref{thm:unify}(a) implies (a) of
		this theorem,
		since $k_1(x, x)=
		\sum_{m=n}^\infty\sigma_me_m(x)^2
		\le C^2\sum_{m=n}^\infty\sigma_m$.
		
		For (b), if we have
		$Q_n(k_{1,\mathrm{diag}})
		\le \mu(k_{1,\mathrm{diag}})$,
		then Theorem \ref{thm:unify} implies
		\[
		\wce(Q_n;\Hil_k, \mu)^2
		\le 4\int_\X k_1(x, x)\dd\mu(x)
		=\int_\X\sum_{m=n}^\infty
		\sigma_m e_m(x)^2\dd\mu(x)
		=
		4\sum_{m=n}^\infty\sigma_m.
		\]
		So it suffices to prove
		$\P{Q_n(k_{1,\mathrm{diag}})
			\le \mu(k_{1,\mathrm{diag}})} \to 1$
		as $N\to\infty$,
		and it is shown by considering
		the optimal basic feasible solution of the LP
		\eqref{eq:lp}
		and the following fact
		\citep[][Proposition 4]{hayakawa21a}:
		\[
		\P{\E{\bm\psi(y)}\in\cv\{\bm\psi(y_1),\ldots,
			\psi(y_N)\}}\to 1,
		\qquad N\to\infty,
		\]
		where $\psi = (\phi_1, \ldots, \phi_{n-1},
		k_{1,\mathrm{diag}})^\top$.
		Indeed, under the event $\bm\psi(y)\in\cv\{\bm\psi(y_1),\ldots,
		\psi(y_N)\}$,
		the LP becomes feasible and
		$Q_n(k_{1,\mathrm{diag}})
		\le \mu(k_{1,\mathrm{diag}})$
		follows from the optimality.
		See the proof of Theorem \ref{thm:uni-exist}
		(Section \ref{sec:proof:uni-exist})
		for a more detailed explanation if necessary.
	\end{proof}
	
	Note that the boundedness of $C$ is a typical assumption
	\citep[see][Assumption 3.2 and references therein]{lu16},
	while it does not necessarily hold \citep[Section 3]{min06}.
	Under some assumptions,
	we can quantify the
	probability that the LP \eqref{eq:lp} becomes
	feasible.
	
	\paragraph{Sampling bound.}
	Suppose $1$ is an eigenfunction of $\mathcal{K}$.
	This is satisfied, e.g., in the following
	cases:
	\begin{itemize}
		\item $\mu$ is a Haar measure on a compact group
		and $k$ is shift-invariant.
		\item $k$ is a kernel based on Stein's identity
		\citep{oat17,sou20,ana21} with respect to $\mu$.
	\end{itemize}
	In this case,
	we have a theoretical bound of the required $N$
	in Algorithm \ref{algo:rchq} as follows.
	\begin{thm}
		Suppose $1$ is an eigenfunction of $\mathcal{K}$, i.e.,
		$\int_\X k(\cdot, y)\dd\mu(y)$ is a constant function.
		Then, for each $n\ge2$ and $N \ge 6(n-1)\sup_{x\in\X}\sum_{m=1}^{n-1}e_m(x)^2$,
		Algorithm \ref{algo:rchq} returns
		a feasible quadrature with probability
		at least $1- 2^{1-n}$, i.e.,
		for an independent sample $y_1, \ldots, y_N$ from $\mu$,
		we have
		\[
		\P{\int_\X\bm\phi(x)\dd\mu(x)
			\in \cv\{\bm\phi(y_1), \ldots, \bm\phi(y_N)\}} \ge 1 -\frac1{2^{n-1}},
		\]
		where $\bm\phi = (e_1, \ldots, e_{n-1})^\top$.
		If the value $C = \sup_{m\ge1}\lVert e_m\rVert_\infty$ is finite,
		$N\ge 6C(n-1)^2$ is also sufficient for the above estimate.
	\end{thm}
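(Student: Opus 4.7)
The proof reduces to a probabilistic inclusion bound for convex hulls of i.i.d.\ isotropic centered random vectors, to which we apply a sharp estimate from \citet{hayakawa21a}.

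First, the assumption that $1$ is an eigenfunction of $\mathcal{K}$ (which is self-adjoint on $L^2(\mu)$) forces each other orthonormal eigenfunction $e_m$ to be $L^2(\mu)$-orthogonal to $1$, so $\int e_m\dd\mu = \langle e_m, 1\rangle_{L^2(\mu)} = 0$. Hence $\int\bm\phi\dd\mu = (1,0,\ldots,0)^\top$. Because the first coordinate of each $\bm\phi(X_i)$ is identically $1$, the event $\int_\X\bm\phi(x)\dd\mu(x)\in\cv\{\bm\phi(X_1),\ldots,\bm\phi(X_N)\}$ is equivalent to $0\in\cv\{Z_1,\ldots,Z_N\}$ in $\R^{n-1}$, where $Z_i\coloneqq(e_1(X_i),\ldots,e_{n-1}(X_i))^\top$. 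Orthonormality of $(e_m)$ in $L^2(\mu)$ then gives $\E{Z_i} = 0$, $\E{Z_iZ_i^\top}=I_{n-1}$, and $\lVert Z_i\rVert^2=\sum_{m=1}^{n-1}e_m(X_i)^2\le L:=\sup_{x\in\X}\sum_{m=1}^{n-1}e_m(x)^2$ almost surely.

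Setting $d=n-1$, the proof then invokes the convex-hull inclusion bound of \citet{hayakawa21a}: for i.i.d.\ centered random vectors in $\R^d$ with identity covariance and squared norm a.s.\ at most $L$, a sample size $N\ge 6dL$ suffices to ensure $\P{0\in\cv\{Z_1,\ldots,Z_N\}}\ge 1-2^{-d}$. Substituting $d=n-1$ and $N\ge 6(n-1)L$ gives the claimed probability $1-2^{1-n}$. The second clause is immediate: $C=\sup_m\lVert e_m\rVert_\infty<\infty$ implies $L\le (n-1)C^2$, so the sufficient condition scales like $(n-1)^2$ (matching the stated bound once $C$ is interpreted consistently with the role of $\lVert e_m\rVert_\infty$).

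The main obstacle is the inclusion bound itself. One direction is clear: $0\notin\cv\{Z_i\}$ iff some unit $v\in S^{d-1}$ satisfies $\langle v,Z_i\rangle>0$ for all $i$, and for each fixed $v$ the scalar $\langle v,Z_1\rangle$ has mean zero, unit variance and modulus at most $\sqrt L$. A simple moment calculation (using $\E{\langle v,Z_1\rangle^2}\le\sqrt L\,\E{|\langle v,Z_1\rangle|}$ and $\E{\langle v,Z_1\rangle_+}\le\sqrt L\,\P{\langle v,Z_1\rangle>0}$) yields $\P{\langle v,Z_1\rangle\le 0}\ge 1/(2L)$, giving a per-direction failure probability at most $(1-1/(2L))^N\le e^{-N/(2L)}=e^{-3d}$ under the hypothesis on $N$. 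Converting this pointwise estimate into a uniform bound with the precise exponential rate $2^{-d}$ and the sharp constant $6$ requires controlling the finitely many sign patterns realizable by halfspaces on the sample, and this is precisely the combinatorial/geometric input supplied by \citet{hayakawa21a}.
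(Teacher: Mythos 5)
Your proof is correct and takes essentially the same route as the paper, whose entire proof is a citation of Theorem 14 and Proposition 17 of \citet{hayakawa21a}; your added value is making explicit the reduction those results are applied to (orthogonality of $e_1,\ldots,e_{n-1}$ to the constant eigenfunction gives $\E{Z_i}=0$, orthonormality in $L^2(\mu)$ gives $\E{Z_iZ_i^\top}=I_{n-1}$, and the a.s.\ bound $\lVert Z_i\rVert^2\le L$), which the paper leaves implicit. Your parenthetical remark about the last clause is well taken: the derivation naturally yields the sufficient condition $N\ge 6C^2(n-1)^2$ rather than the stated $N\ge 6C(n-1)^2$, so the paper's statement appears to contain a typo (or implicitly takes $C$ to bound $\lVert e_m\rVert_\infty^2$).
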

	\begin{proof}
		This follows from the existing
		results \citep[Theorem 14 and Proposition 17]{hayakawa21a}. 
	\end{proof}
	
	\subsection{Algorithm \ref{algo:rchq}
		with Nystr{\"o}m Approximation}\label{sec:app-nys}
	
	Although the method discussed in the previous section
	requires the knowledge of Mercer decomposition,
	if we make use of the Nystr{\"o}m approximation,
	we only require the values of $\int_\X k(x, y)\dd\mu(y)$
	for $x\in\X$.
	
	Recall that $k_s^Z(x, y)$ is the rank-$s$
	Nystr{\"o}m approximation of the kernel $k$
	based on the point set $Z = (z_1, \ldots, z_\ell)$.
	From \eqref{eq:nys-sum},
	we can use $\phi_i^Z \coloneqq u_i^\top k(Z, \cdot)$
	as test functions.
	
	\begin{thm}\label{thm:nys-exact}
		Let $n \le \ell$ and $\delta > 0$,
		and let $Z$ be an $\ell$-point independent sample
		from $\mu$.
		If Algorithm~\ref{algo:rchq} with
		$k_0 = k_{n-1}^Z$ and $\phi_i = \phi_i^Z$
		successfully outputs a convex quadrature
		$Q_n$,
		then with probability at least $1-\delta$,
		we have
		\[
		\wce(Q_n;\Hil_k, \mu)^2
		\le
		4n\sigma_n
		+4\sum_{m=n+1}^\infty\sigma_m
		+
		\frac{8(n-1)\sup_{x\in\X} k(x, x)}{\sqrt{\ell}}
		\left(1 + \sqrt{2\log \frac1\delta}\right)
		\]
	\end{thm}
	\begin{proof}
		As in the proof of Corollary \ref{cor:nys},
		$k-k_{n-1}^Z$ is positive semi-definite.
		Also, we can assume $\sum_{m=1}^\infty\sigma_m<\infty$,
		as otherwise the right-hand side is infinity.
		Thus $k$ and $k_0=k_{n-1}^Z$ satisfy Assumption \ref{asp:ker}.
		
		Note that for a function of the form
		$c(x, y) = a\cdot b(x)b(y)$ with
		$a\in\R$ and $b\in L^2(\mu)$,
		and an orthonomal set $(f_i)_{i\in I}\subset L^2(\mu)$
		of $L^2(\mu)$ with $b\in\overline{\mathop\mathrm{span}
			\{f_i\mid i\in I\}}$,
		we have
		\begin{align}
			\sum_{i\in I}\iint_{\X\times\X}
			f_i(x)c(x, y)f_i(y)\dd\mu(x)\dd\mu(y)
			&= a \sum_{i\in I}\ip{b, f_i}_{L^2}^2\nonumber\\
			&= a \lVert b \rVert_{L^2}^2
			= \int_\X c(x, x)\dd\mu(x).\label{eq:trace-ip}
		\end{align}
		
		If we here use the orthonormal set $(e_m)_{m=1}^\infty$
		that appears in the Mercer decomposition,
		by letting $k_1\coloneqq k-k_{n-1}^Z$ and using
		the linear extension
		of \eqref{eq:trace-ip},
		we have
		\begin{align}
			\int_\X k_1(x, x)\dd\mu(x)
			&= \int_\X (k(x, x) - k_0(x, x))\dd\mu(x)
			\nonumber\\
			&= \sum_{m=1}^\infty \ip{e_m, (\K-\K_{n-1}^Z) e_m}_{L^2}
			\nonumber\\
			&\le \sum_{m=n}^\infty \ip{e_m, \K e_m}_{L^2} +
			\sum_{m=1}^{n-1}\lVert \K - \K_{n-1}^Z \rVert
			\lVert e_m\rVert^2_{L^2}\nonumber\\
			&= \sum_{m=n}^\infty \sigma_m + 
			(n-1)\lVert \K - \K_{n-1}^Z \rVert.
			\label{eq:nys-estimate}
		\end{align}
		Then, by combining this with Theorem \ref{thm:nys-op-norm}
		and Theorem \ref{thm:unify}(c),
		we obtain the desired estimate.
	\end{proof}
	
	\begin{remark}
		If we denote by $N_\phi$
		the required number of samples,
		the computational complexity of the above algorithm
		becomes $\ord{n\ell N_\phi + n\ell^2
			+ C(n, N_\phi)}$,
		including the cost of computing
		the Nystr{\"o}m approximation as well as
		test functions at $N_\phi$ samples
		(see also Remark \ref{rem:comp-nys}).
	\end{remark}

	\section{Additional Numerical Experiments}\label{app:experiments}
	In this section, we provide additional experiments
	on Algorithm \ref{algo:rchq} using random convex hulls,
	as well as the approximated version of the {\bf N. + emp} described in Remark \ref{rem:comp-nys}.
	Section \ref{sec:app-sob} shows the comparison of Algorithm \ref{algo:rchq}
	(with Mercer/Nystr\"om approximation)
	with some of the methods mentioned in the main text
	under the periodic Sobolev spaces with uniform measure.
	Section \ref{sec:app-ml} investigates Algorithm \ref{algo:rchq}
	(with Nyst\"om approximation)
	as well as the approximate but fast algorithm for {\bf N. + emp},
	under the setting of empirical measure reduction.
	
	\subsection{Periodic Sobolev Spaces with Uniform Measure}\label{sec:app-sob}
	We conducted experiments under the same setting as in Section \ref{sec:sob},
	except that we additionally have the following methods:
	\begin{description}
		\item[Nystr\"om, Nystr\"om + opt:]
		We used the same test functions as {\bf N. + emp}
		with the random set $Z$ of size $\ell = 10n$,
		but for Algorithm \ref{algo:rchq}.
		We used $N = 10n$ samples for the LP \eqref{eq:lp}.
		In {\bf Nystr\"om + opt}
		we additionally optimized the convex weights using \eqref{eq:wce-general}.
		\item[Mercer, Mercer + opt ($d=1$):]
		We used the same test functions as {\bf M. + emp},
		but for Algorithm \ref{algo:rchq}.
		We used $N = 10n$ samples for the LP \eqref{eq:lp}.
		In {\bf Mercer + opt}
		we additionally optimized the convex weights using \eqref{eq:wce-general}.
	\end{description}
	\begin{figure}
		\centering
		\begin{subfigure}[h]{0.49\hsize}
			\centering
			\includegraphics[width=\hsize]{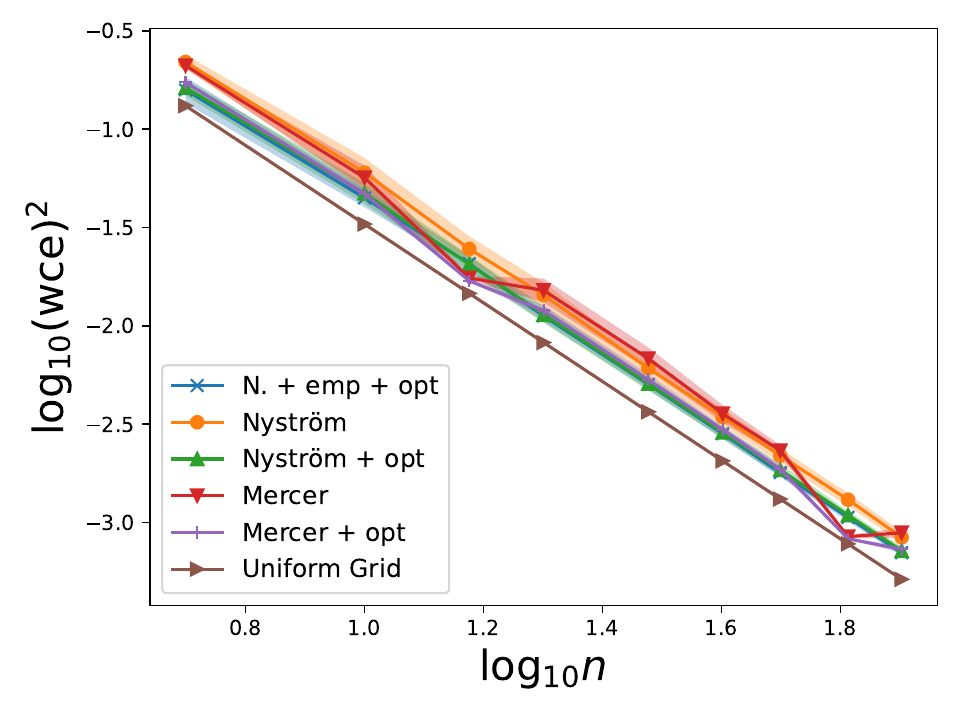}
			\caption{$d=1$, $r = 1$}
		\end{subfigure}
		\begin{subfigure}[h]{0.49\hsize}
			\centering
			\includegraphics[width=\hsize]{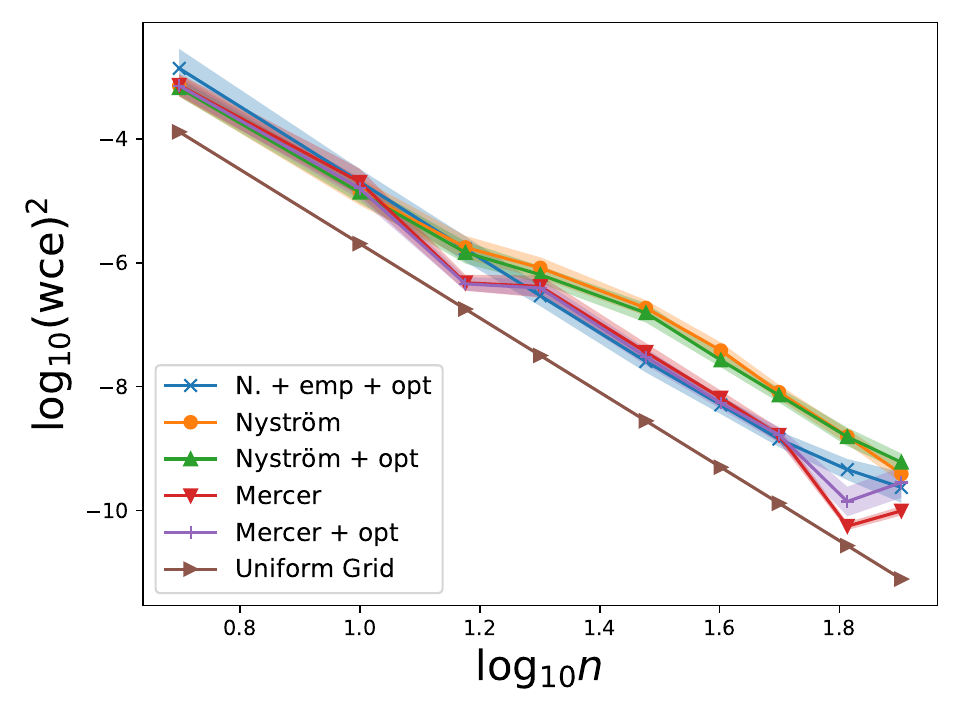}
			\caption{$d=1$, $r = 3$}
		\end{subfigure}
		\begin{subfigure}[h]{0.49\hsize}
			\centering
			\includegraphics[width=\hsize]{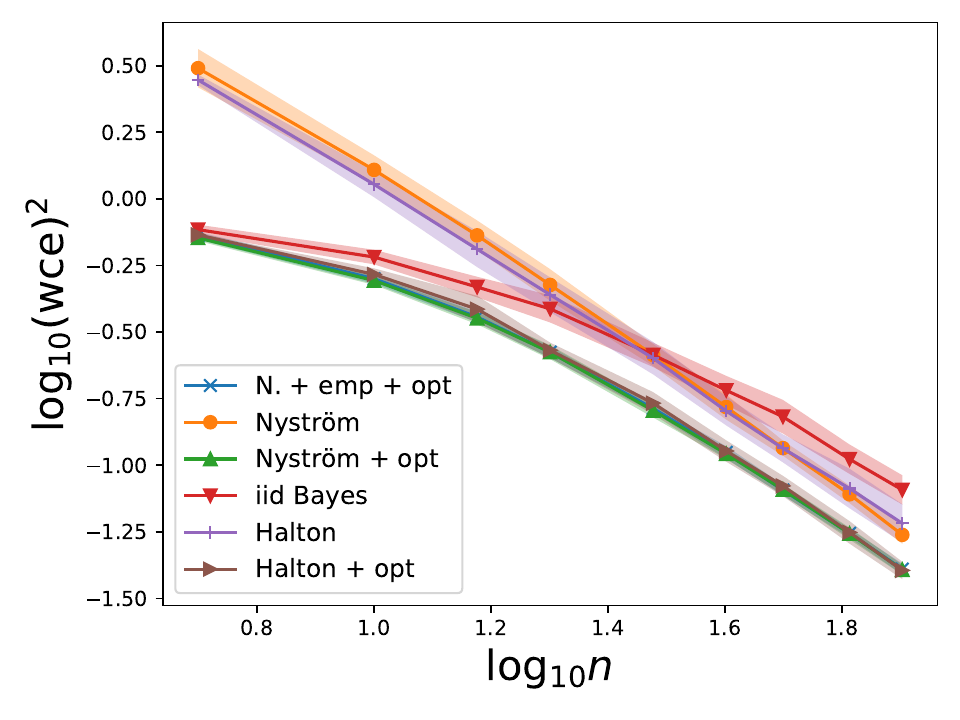}
			\caption{$d=2$, $r = 1$}
		\end{subfigure}
		\begin{subfigure}[h]{0.49\hsize}
			\centering
			\includegraphics[width=\hsize]{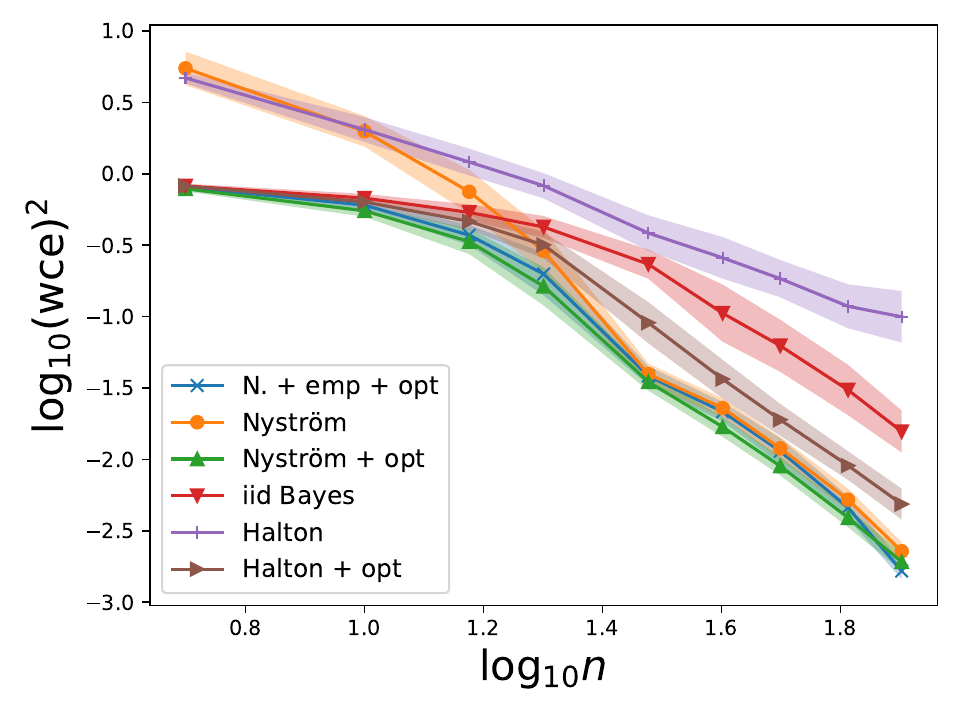}
			\caption{$d=3$, $r = 3$}
		\end{subfigure}
		\caption{Periodic Sobolev spaces with kernel $k_r^{\otimes d}$:
			The average of $\log_{10}(\wce(Q_n; \Hil_k, \mu)^2)$ over $50$ trials
			is plotted for each method of obtaining $Q_n$.
			The shaded regions are sample standard deviation.
			The worst computational time per one trial
			was 5 seconds of {\bf N. + emp} and {\bf N. + emp + opt}
			in $(d, r, n) = (3, 3, 80)$, while {\bf Nystr\"om} and {\bf Nystr\"om + opt}
			ran in 0.9 seconds under the same setting. There were no infeasible LPs.}
		\label{app-fig-sob}
	\end{figure}
	\begin{figure}
		\centering
		\begin{subfigure}[h]{0.49\hsize}
			\centering
			\includegraphics[width=\hsize]{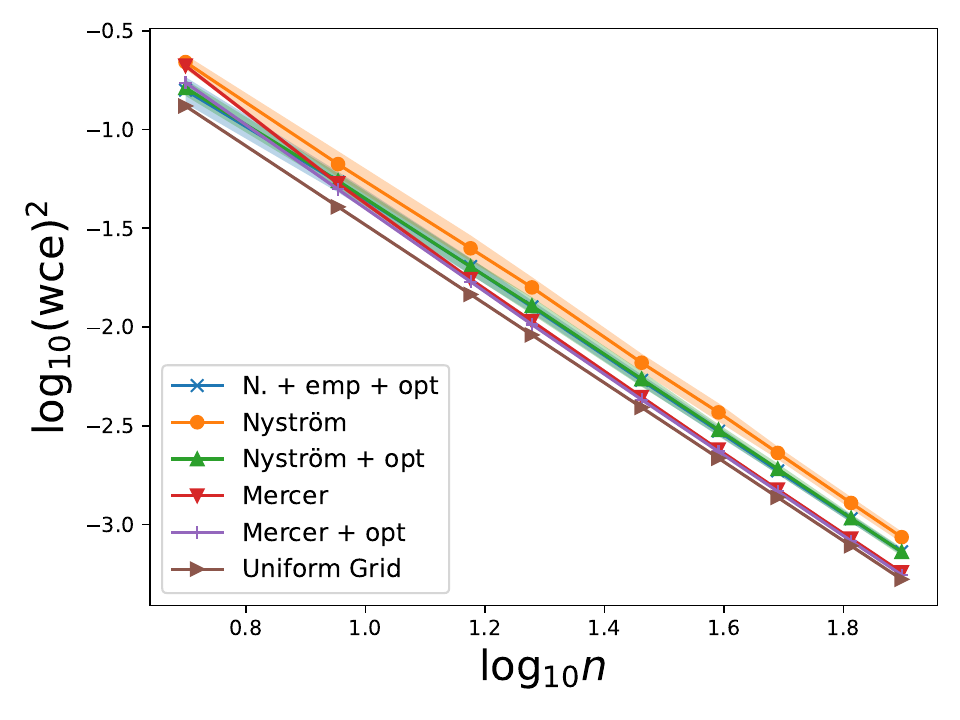}
			\caption{$d=1$, $r = 1$}
		\end{subfigure}
		\begin{subfigure}[h]{0.49\hsize}
			\centering
			\includegraphics[width=\hsize]{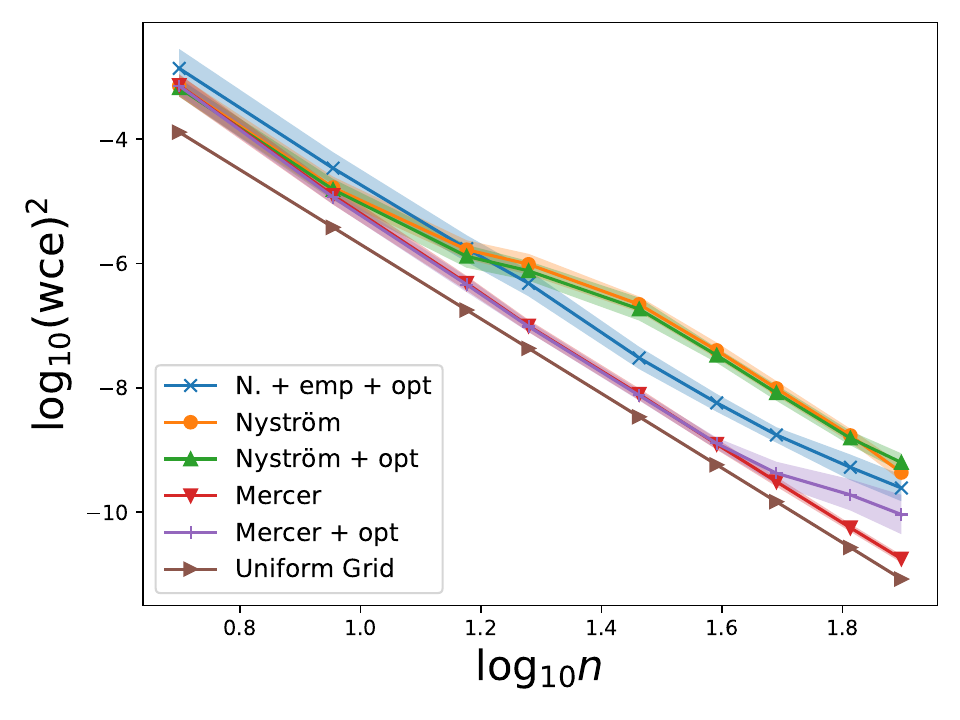}
			\caption{$d=1$, $r = 3$}
		\end{subfigure}
		\caption{Supplemental experiments for
			Figure \ref{app-fig-sob}. $n$ is all odd.}\label{app-fig-sob-supple}
	\end{figure}
	
	The results are given in Figure \ref{app-fig-sob}.
	The weights of {\bf Nystr\"om} and {\bf Mercer} are already almost
	optimized as they exactly integrate a certain family of functions,
	so the additional CQP \eqref{eq:wce-general} does not change the error
	so much. Surprising is that {\bf N. + emp + opt} is almost as good as
	{\bf Nystr\"om + opt} or even better.
	This implies that the recombination points with respect to a moderately large
	($N=n^2$ in this case) empirical measure
	can provide a good convergence rate in Bayesian quadrature \citep{hus12},
	even though the (equally weighted)
	empirical measure itself is not that close to the true measure.
	
	\paragraph{Odd behavior of `Mercer'.}
	As we can see in Figure \ref{app-fig-sob}(a,b),
	the methods based on the exact Mercer decomposition becomes
	very close to optimal when $n = 15, 65$.
	As it seemed to be caused by the parity of $n$,
	we carried out another experiment for $n\in\{5, 9, 15, 19, 29, 39, 49, 65, 79\}$
	(Figure \ref{app-fig-sob-supple}),
	then {\bf Mercer} and its optimization clearly became the best methods
	except the exact optimal {\bf Unifrom Grid}.
	It might be related to the structure of the periodic Sobolev space,
	that has, for each eigenvalue except for $1$, two-dimensional eigenspace 
	($\cos$ and $\sin$), but needs further investigation.
	Also, in the case $(d, r) = (1, 3)$,
	we see `{\bf + opt}' make the quadrature less accurate for a big $n$,
	but it is theoretically almost impossible,
	so it seems to be caused by numerical accuracy of the CQP solver.
	
	\subsection{Measure Reduction in Machine Learning Datasets}\label{sec:app-ml}
	We conducted experiments under the same setting as in Section \ref{sec:ml}.
	We additionally adopted {\bf Nystr\"om, Nystr\"om + opt} (with $N = 20n$),
	and {\bf FNE, FNE + opt},
	where {\bf FNE} (stands for `fast N. + emp')
	is the approximate algorithm for {\bf N. + emp}
	by omitting the inequality in \eqref{eq:nys-constraints}
	and using the randomized SVD \citep{hal11} (see Remark \ref{rem:comp-nys}).
	
	\begin{figure}[H]
		\centering
		\begin{subfigure}[h]{0.49\hsize}
			\centering
			\includegraphics[width=\hsize]{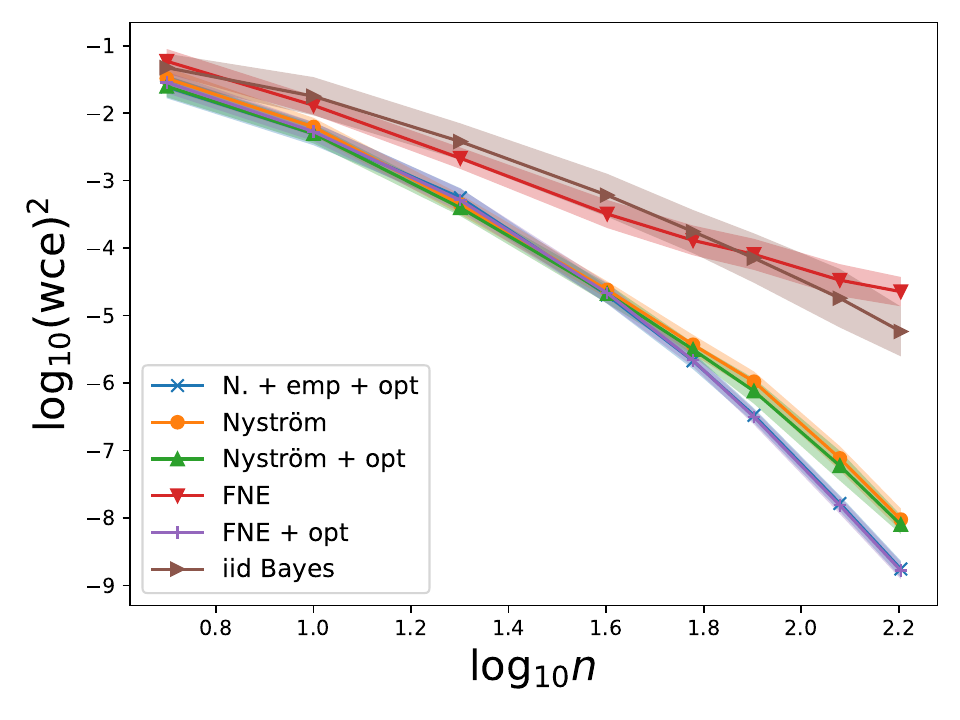}
			\caption{3D Road Network data}
		\end{subfigure}
		\begin{subfigure}[h]{0.49\hsize}
			\centering
			\includegraphics[width=\hsize]{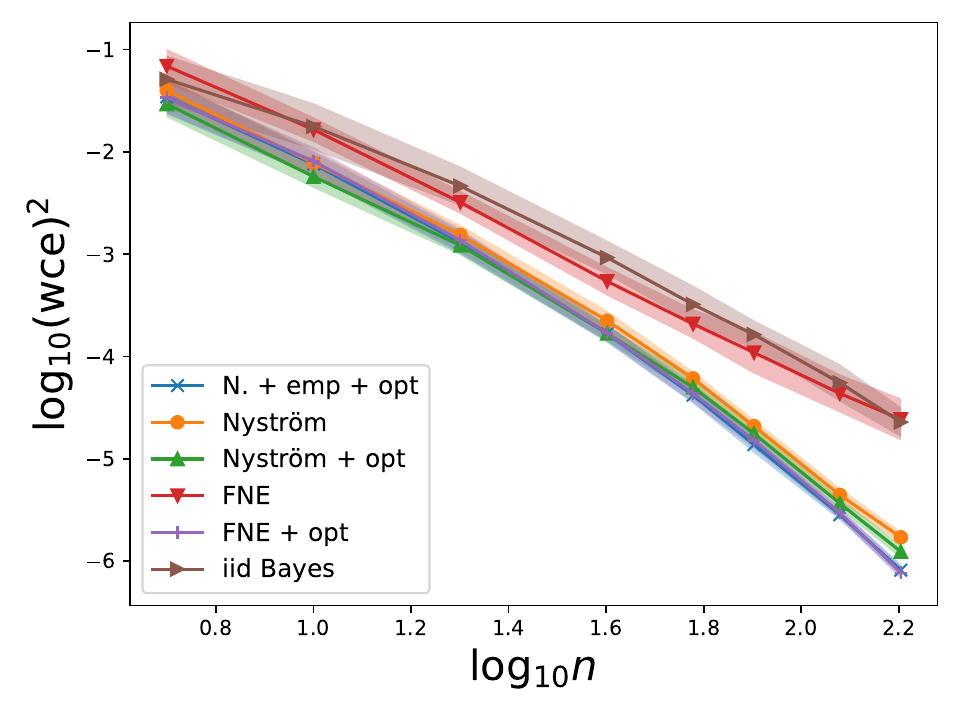}
			\caption{Power Plant data}
		\end{subfigure}
		\caption{Measure reduction in Gaussian RKHS with two ML datasets:
			The average of
			$\log_{10}(\wce(Q_n; \Hil_k, \mu)^2)$ over $50$ trials
			is plotted for each method of obtaining $Q_n$.
			The shaded regions are sample standard deviation.
			The worst computational time per one trial
			was 13 seconds of {\bf N. + emp} and {\bf N. + emp + opt}
			in 3D Road Network data with $n = 160$.
			There were 7 infeasible LPs (and 800 feasible LPs) in the experiment (a)
			with {\bf Nystr\"om} or {\bf Nystr\"om + opt}.
			There were no infeasible LPs in (b).}
		\label{app-fig-ml}
	\end{figure}
	
	The results are given in Figure \ref{app-fig-ml}.
	{\bf N. + emp + opt} and {\bf FNE + opt} show almost
	the same convergence.
	While in the largest case $n = 160$,
	the average runtime of ({\bf N. + emp + opt}, {\bf FNE + opt})
	was $(13.0, 2.07)$ seconds in 3D Road Network data
	and $(12.8, 2.09)$ seconds in Power Plant data, respectively.
	Although our theoretical guarantee no longer holds for {\bf FNE},
	it accelerates the algorithm while surprisingly maintaining the accuracy.
	{\bf Nyst\"om} or {\bf Nystr\"om + opt} behave much better than {\bf iid Bayes},
	but are slightly less accurate than {\bf N. + emp + opt} and {\bf FNE + opt},
	whereas they have good theoretical guarantees (Theorem \ref{thm:nys-exact}).
	Their computational time was basically between that of {\bf FNE + opt}
	and {\bf N. + emp + opt}.

    \paragraph{Comparison with another empirical measure.}
    The setting of `ML datasets' treated here is
    empirical measures given by some real data,
    so it is also just an approximation of a true distribution
    from the viewpoint of frequentists.
    Therefore, if we want to evaluate the performance of measure reduction methods
    with regard to the true distribution,
    we should measure the worst-case error using it.
    As it is not feasible in reality, we take another empirical measure
    $\mu^\prime$ (of the same size as
    but different from the empirical measure $\mu$, used in the construction
    of a kernel quadrature rule $Q_n$),
    and plot the quantities of $\wce(Q_n; \Hil_k, \mu^\prime)$
    to better estimate the actual performance of $Q_n$ in this section.
    
    The overall setting is the same as in Section \ref{sec:ml},
    except the following points:
    \begin{itemize}
        \item In the 3D Road Network Data Set,
        we used another random 43487-point subset
        from the remaining $434874 - 43487$ data points
        to define $\mu^\prime$.
        \item In the Combined Cycle Power Plant Data Set,
        we used exactly a half of the whole data points to define $\mu$
        (so the size of $\supp\mu$ is different from the original experiment)
        and the other half to define $\mu^\prime$.
    \end{itemize}
    Note that $\mu$ and $\mu^\prime$ were randomly taken at first
    and fixed throughout the experiment.
    The median heuristics as well as the normalization of the data
    (for both of the points in $\mu$ and $\mu^\prime$)
    was carried out by using the statistical information solely given by $\mu$.
    \begin{figure}[H]
		\centering
		\begin{subfigure}[h]{0.49\hsize}
			\centering
			\includegraphics[width=\hsize]{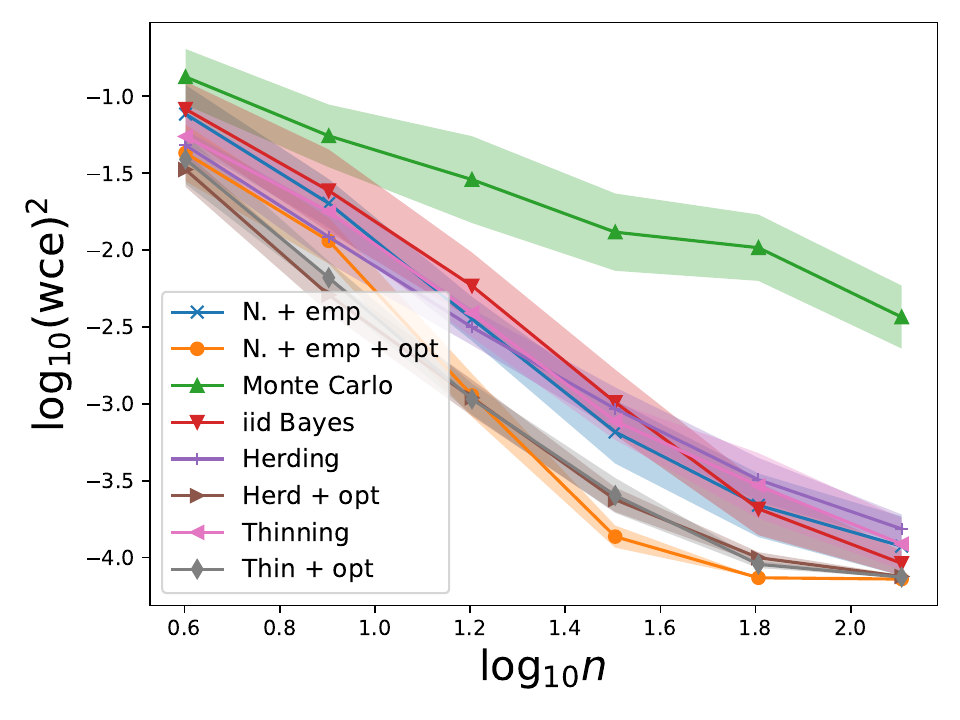}
			\caption{3D Road Network data}
		\end{subfigure}
		\begin{subfigure}[h]{0.49\hsize}
			\centering
			\includegraphics[width=\hsize]{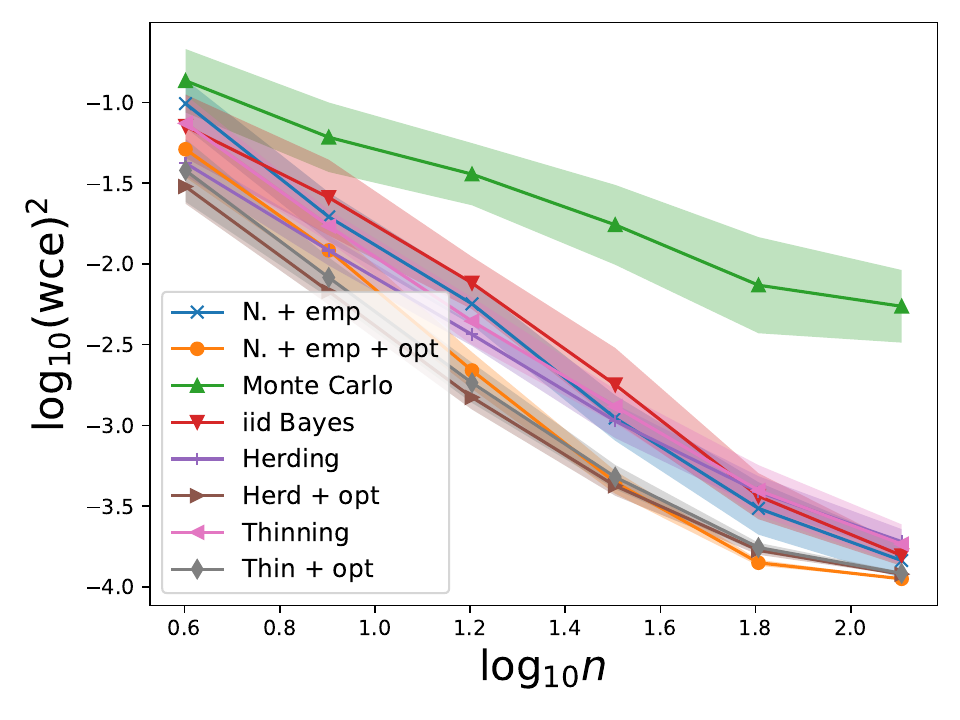}
			\caption{Power Plant data}
		\end{subfigure}
		\caption{Measure reduction in Gaussian RKHS with two ML datasets
		    with another empirical measure:
			The average of
			$\log_{10}(\wce(Q_n; \Hil_k, \mu^\prime)^2)$ over $20$ trials
			is plotted for each method of obtaining $Q_n$.
			The shaded regions show their standard deviation.
			The worst computational time per one trial
			was 14 seconds of {\bf Thinning [+ opt]}
			in Power Plant data with $n = 128$,
			where {\bf N. + emp [+ opt]} was 6.2 [6.1] seconds.}
		\label{app-fig-ml-split}
	\end{figure}
    The results are given in Figure \ref{app-fig-ml-split}.
    We can see that, though our methods are still competitive,
    the error eventually becomes dominated by the (MMD-)distance between
    $\mu$ and $\mu^\prime$ as $n$ gets larger.
    This is inevitable as we are only using the empirical measure $\mu$
    to construct $Q_n$, so in an application to this kind of setting,
    we can just pick any method whose error is sufficiently small
    compared to the `inevitable' error.
\end{document}